\newtheorem{thm}{Theorem}[section]
\newtheorem{prop}[thm]{Proposition}
\newtheorem{lem}[thm]{Lemma}
\newtheorem{cor}[thm]{Corollary}
\theoremstyle{definition}
\newtheorem{defn}[thm]{Definition}
\theoremstyle{remark}
\newtheorem{rem}[thm]{Remark}
\newtheorem{bassum}[thm]{Basic Assumption}
\newtheorem{assum}[thm]{Assumption}
\numberwithin{equation}{section}
\DeclareMathOperator{\tr}{Tr}
\DeclareMathOperator{\SF}{sf}
\DeclareMathOperator{\ind}{Ind}
\DeclareMathOperator{\im}{Im}
\DeclareMathOperator{\spec}{Sp}
\DeclareMathOperator{\dom}{dom}
\DeclareMathOperator{\ch}{ch}
\title{Higher Spectral Flow for Dirac Operators with Local Boundary Conditions}
\author{
Jianqing YU\footnote{School of Mathematical Sciences,
University of Science and Technology of China,
96 Jinzhai Road, Hefei, Anhui 230026,
P. R. China. (jianqing@ustc.edu.cn)}
}
\date{}
\begin{document}

\maketitle

\begin{abstract}
We consider a gauge invariant one parameter family of families of fiberwise twisted Dirac
type operators on a fiberation with the typical fiber an even dimensional compact manifold
with boundary, i.e., a family $\{D_u\}, u\in [0,1]$ with $D_1=gD_0g^{-1}$ for a suitable
unitary automorphism $g$ of the twisted bundle.
Suppose all the operators $D_u$ are imposed with
a certain \emph{local elliptic} boundary condition $F$ and $D_{u,F}$ is the self-adjoint extension of $D_u$.
We establish a formula for the
higher spectral flow of $\{D_{u,F}\}$, $u\in[0,1]$. Our result
generalizes a recent result of Gorokhovsky and Lesch to the families case.
\end{abstract}

\section{Introduction}
The spectral flow for a curve of self-adjoint Fredholm operators
is introduced by Atiyah-Patodi-Singer \cite{MR0397799}, and is an integer that counts with
signs the number of eigenvalues passing through zero from the start of the curve to its end. (Throughout the paper a family always means
a continuous and sometimes smooth family, and a curve always means a one parameter family.) In \cite{MR3077920},
Prokhorova computed the spectral flow on a compact planar domain
for a curve which is invariant by a scalar gauge transformation
of Dirac type operators imposed with a certain local boundary condition.
In a recent paper \cite{gl}, based on the heat kernel approach,
Gorokhovsky and Lesch extended the results of \cite{MR3077920} to the case
of arbitrary compact even dimensional manifolds with boundary.

As a natural extension of the classical spectral flow to the families case, Dai and Zhang introduced
the higher spectral flow in \cite{MR1393386,MR1638328} for a curve of families of Dirac type operators
parameterized  by a compact space, which takes values in the $K$-group of the parameter space, with the help of
the notation of spectral sections introduced by Melrose-Piazza \cite{MR1472895}.
The higher spectral flow and spectral sections
have since then found other significant applications; see \cite{MR1601842},
\cite{MR1744589}, \cite{MR1979016}, \cite{MR3034288}, etc.

It is a natural question to generalize the result of Gorokhovsky-Lesch \cite{gl} to the families case.
We recall that Gorokhovsky and Lesch essentially used the Getzler formula \cite{MR1231957}
to reduce the problem to the computation of the heat kernel on a half-cylinder.
Our first observation is that by combining the arguments in \cite{gl} with
the heat kernel computations in \cite[Section 4]{MR1638328}, one can
obtain a families version of \cite[Theorem 3.3]{gl}, which expresses the Chern character of the higher spectral flow
of the families of Dirac type operators imposed with local boundary conditions in terms of that of the families of boundary operators.
However, the obtained formula fails to hold on the level of $K$-theory, since taking the Chern character
might kill some torsion elements involved in the higher spectral flow.

It is the purpose of the present paper to establish the desired formula in full generality
on $K$-theoretic level; see Theorem \ref{main}.
The general idea here is inspired by Dai-Zhang \cite{MR1447295} and is
to reduce the problem to the calculations on families of finite cylinders through
a splitting type formula of the higher spectral flow. The deduction of such a splitting formula
is essentially a modification of \cite[Section 3]{MR1447295}, which is based on the beautiful idea of
Bunke \cite{MR1331973} in his approach to the splitting formula for $\eta$-invariants.

As in \cite[Section 4]{MR1638328}, the higher spectral flow we consider here also gives an
interpretation of the index bundle of a Toeplitz type family associated to the family of Dirac type operators with a local boundary condition.
Thus, our result can be thought of as an odd analogue of a theorem of Freed \cite[Theorem  B]{MR1651419}
which expresses the families index of the Dirac type operators on odd dimensional fibers imposed with
a local boundary condition in terms of that of the boundary operators.

We indicate here that even though our results are stated for spin manifolds, both the statements
and proofs extend without difficulty to the more general case when there are smoothly varying $\mathbb{Z}_2$-graded
Hermitian Clifford modules over the fibers.

A brief outline of the paper is as follows. In Section \ref{sec2}, we recall the local boundary conditions, spectral sections and
the higher spectral flow, and introduce a Toeplitz type family whose index bundle can be interpreted as a higher spectral
flow. In Section \ref{sec3}, we state our main result
which extends \cite[Theorem 3.3]{gl} to the families case, and provide a proof.

\vspace{3mm}\textbf{Acknowledgements}\ \ This work was supported by the China Postdoctoral
Science Foundation (Grant No. 2014M551805). The author is indebted to
Professors Xianzhe Dai and Weiping Zhang for helpful suggestions.

\section{Local boundary conditions and higher spectral flow}\label{sec2}

\subsection{Local boundary conditions for a family of Dirac type operators}\label{sec2.1}
Let $\pi: M\rightarrow S$ be a smooth fibration with the typical fiber
an even dimensional compact manifold $Y$ with boundary $Z$. We assume $S$ is compact.
We fix a connection for this fibration
which amounts to a splitting
\begin{equation}\label{y3}
TM=TY\oplus T^HM,
\end{equation}
where $TY$ and $T^HM$ denote the vertical and the horizontal bundles, respectively.
We also have the identification $T^HM=\pi^*TS$.

Let $g^{TY}$ be a metric on $TY$. We choose a Riemannian metric $g^{TS}$ on $S$, and
let $g^{TM}$ be the metric on $TM$ corresponding to \eqref{y3} defined by the orthogonal sum
\begin{equation}\label{y25}
g^{TM}=g^{TY}\oplus \pi^*g^{TS}.
\end{equation}
Let $\nabla^{TM}$ be the Levi-Civita connection associated to $(TM,g^{TM})$. We denote by
$P^{TY}$ the orthogonal projection of $TM$ onto $TY$. Following Bismut \cite{MR813584},
let $\nabla^{TY}$ be the connection on $TY$ defined
by $\nabla^{TY}=P^{TY}\nabla^{TM}P^{TY}$. Then $\nabla^{TY}$ preserves $g^{TY}$, and is independent
of the choice of $g^{TS}$.

For any $b\in S$ and any vector bundle $\mathcal{E}$ over $M$, we set
\begin{equation*}
Y_b=\pi^{-1}(b),\ Z_b=\partial Y_b, \ \text{and}\ \mathcal{E}_b=\mathcal{E}|_{Y_b}.
\end{equation*}
Assuming that $\mathcal{E}$ carries a metric $g^\mathcal{E}$ and a connection $\nabla^\mathcal{E}$,
we denote by $\mathbf{H}^p(Y_b,\mathcal{E}_b)$, $p\geq 0$
the space of sections of $\mathcal{E}_b$ over $Y_b$ which lie in the $p$-th Sobolev space.
When $b$ runs through $S$, we get a Hilbert bundle
$\mathbf{H}^p(Y,\mathcal{E})$, $p\geq 0$ which will be called the $p$-th Sobolev bundle associated to $(g^\mathcal{E},\nabla^\mathcal{E})$.

We make the assumption that $TY$ is spin and carries a fixed spin structure.
Let $S(TY)$ be the Hermitian bundle of spinors associated to $(TY,g^{TY})$ with the $\mathbb{Z}_2$-grading $S(TY)=S_{+}(TY)\oplus S_{-}(TY)$.
Then $\nabla^{TY}$ induces naturally a Hermitian connection $\nabla^{S(TY)}$ on $S(TY)$ preserving the $\mathbb{Z}_2$-grading.

Let $E$ be a complex vector bundle over $M$, which carries a Hermitian metric $g^E$ and a Hermitian connection $\nabla^E$.
We denote by $c(\cdot)$ the Clifford action on $S(TY)$. Then it extends to an action $c(\cdot)\otimes {\rm id}|_E$ on $S(TY)\otimes E$,
which we still denote by $c(\cdot)$.
Let $\nabla^{S(TY)\otimes E}$ be the tensor product connection on $S(TY)\otimes E$ induced by $\nabla^{S(TY)}$ and $\nabla^E$.

For any $b\in S$, one has a canonically defined formally self-adjoint twisted Dirac operator which is given by
\begin{equation}
\begin{split}\label{d1}
D_b^{Y}&=\sum_{i=1}^{\dim Y}c(e_i)\nabla^{S(TY)\otimes E}_{e_i}:
\\
&\hspace{30pt}\Gamma(Y_b,S(TY)_b\otimes E_b)\longrightarrow \Gamma(Y_b,S(TY)_b\otimes E_b),
\end{split}
\end{equation}
where $\{e_i\}_{i=1}^{\dim Y}$ is an oriented orthonormal basis of $(TY_b, g^{TY}|_{Y_b})$.
We denote by $D^Y_{b,\pm}$ the restrictions of $D^Y_b$ on $\Gamma(Y_b,S_\pm(TY)_b\otimes E_b)$.
When $b$ runs through $S$, we obtain smooth families $D^Y=\{D^Y_b\}_{b\in S}$ and $D^Y_\pm=\{D^Y_{b,\pm}\}_{b\in S}$.

Let $\tfrac{\partial}{\partial r}$ denote the inward unit normal vector field perpendicular to $Z$.
Set $J=c(\tfrac{\partial}{\partial r})$.
Let $S(TZ)$ be the spinor bundle of $TZ$,  which carries a natural Hermitian metric induced by $g^{TZ}:=g^{TY}|_{TZ}$.
We identify $S(TZ)$ with $S_+(TY)|_{Z}$ isometrically.
Also, the Clifford action $\widetilde{c}(v)$ on $S(TZ)\otimes E|_{Z}$ can be specified by $-Jc(v)$ for $v\in TZ$.

Using the orthogonal decomposition induced by \eqref{y25},
\begin{equation}\label{yy5}
g^{TM}\big|_{T\partial M}=g^{TZ}\oplus (\pi|_{\partial M})^*g^{TS},
\end{equation}
we define a connection $\nabla^{TZ}$ on $TZ$,
which induces a Hermitian connection $\nabla^{S(TZ)}$ on $S(TZ)$.
Let $D^Z=\{D^Z_b\}_{b\in S}$ be the family of canonically defined twisted Dirac operators on
$S(TZ)\otimes E|_{Z}$ as in \eqref{d1}.

Let $D=\{D_b\}_{b\in S}$ be a smooth family of formally self-adjoint Dirac type operators in the sense of \cite[Definition 3.1]{MR1233386} on $\{Y_b\}_{b\in S}$.
We define the \emph{local boundary conditions} for $D$ as in \cite[Section 3.1]{gl}.

For a smooth family of invertible and self-adjoint
endomorphisms $F_b$ of $E|_{Z_b}$, $b\in S$, set (see \cite[(3.3)]{gl})
\begin{equation}
\label{lbd}
\dom (D_{b,F_b})=
\bigl\{s\in \mathbf{H}^1(Y_b,S(TY)_b\otimes E_b)\,\big|\, s_-|_{Z_b}=JF_bs_+|_{Z_b}\bigr\},
\end{equation}
where $s=s_+\oplus s_-$ splits with respect to the natural $\mathbb{Z}_2$-grading.

By \cite[Proposition~3.1]{gl}, we know that for every $b\in S$,
\begin{equation}\label{y22}
D_{b,F_b}=D_b\big|_{\dom (D_{b,F_b})}:\dom (D_{b,F_b})\longrightarrow \mathbf{H}^0(Y_b,S(TY)_b\otimes E_b)
\end{equation}
is a self-adjoint Fredholm operator. Furthermore, its
spectrum is discrete and each eigenvalue has finite multiplicity.
For the sake of simplicity, we will say $D_{F}=\{D_{b,F_b}\}_{b\in S}$ is the
realization of $D$ imposed with the local boundary condition $F=\{F_b\}_{b\in S}$.

%\subsection{Geometry near the boundary}\label{sec2.2}

%We continue the discussion of last subsection.
%Let $\pi|_{\partial M}:\partial M \rightarrow S$ denote the boundary fibration with the typical
%fiber $Z$.
%We follow the convention as in the paper of Gilkey \cite[Section~2]{MR1252030}.

%Then one verifies by direct computations that (compare with \cite[Lemmas~2.1, 2.2]{MR1252030})
%\begin{equation}\label{d2}
%D^Z=-\sum_{i=1}^{\dim Y-1}Jc(e_i)\nabla_{e_i}^{S_+(TY)\otimes E}+\frac{1}{2}\sum_{i=1}^{\dim Y-1}\pi_{ii}\ ,
%\end{equation}
%where $\pi_{ij}:=\langle\nabla^{TY}_{e_i}e_j,\tfrac{\partial}{\partial r}\rangle|_{Z}$ denotes the second fundamental form
%of the isometric embedding $(Z,g^{TZ})\hookrightarrow (Y,g^{TY})$.

\subsection{The higher spectral flow for families of Dirac operators with local boundary conditions}

Let $D=\{D_{b}\}_{b\in S}$ be a family of formally self-adjoint Dirac type operators. As in Section \ref{sec2.1},
we impose on $D$ a local boundary
condition $F=\{F_b\}_{b\in S}$ and denote its self-adjoint realization
by $D_{F}=\{D_{b,F_b}\}_{b\in S}$.

We first recall the notation of spectral sections
introduced by Melrose-Piazza \cite{MR1472895} in their study of the families index of
Dirac operators on manifolds with boundary.

\begin{defn}[{cf. \cite[Definition 1]{MR1472895}}]\label{defsps}
A spectral section for the family $D_F=\{ D_{b,F_b}\}_{b\in S}$ is a
family of self-adjoint projections $P_b$ on $\dom (D_{b,F_b})$
such that for some smooth function $R: S\rightarrow \mathbb{R}$ and every $b\in S$,
\begin{align}
D_{b,F_b}s=\lambda s \Rightarrow
\left\{
\begin{aligned}
P_b s&=s,\quad\text{if}\quad\lambda>R(b),
\\
P_b s&=0,\quad\text{if}\quad\lambda<-R(b).
\end{aligned}
\right.
\end{align}
\end{defn}

The existence of a spectral section of $D_F=\{ D_{b,F_b}\}_{b\in S}$ is guaranteed by
the following assumption; see \cite[Proposition 1]{MR1472895}.

\begin{bassum}\label{basic}
We assume that the canonical family $D^{Y}$ imposed with the local boundary condition $F$
has vanishing index bundle, i.e.,
\begin{equation}
\ind (D^Y_F)=0 \quad \text{in}\ K^1(S).
\end{equation}
\end{bassum}

In what follows, we always fixed a local boundary condition $F$, and make Basic Assumption \ref{basic}.

From Definition \ref{defsps}, one infers that any two spectral sections of ${D_F}$ differ by a family of
finite rank operators. Further, the generalized spectral section, which can be thought of
as a fiberwise compact perturbation of a spectral section, was introduced by Dai and Zhang \cite[Definition 2.1]{MR1447295} in their
study of the splitting of the families index.

\begin{defn}[{cf. \cite[Definition 2.1]{MR1447295}}]\label{defgsp}
A generalized spectral section for the family $D_F=\{ D_{b,F_b}\}_{b\in S}$ is a family of self-adjoint projections $Q=\{Q_b\}_{b\in S}$
such that $Q_b$ is a compact perturbation of $P_b$ for $b\in S$,
where $P=\{P_b\}_{b\in S}$ is a (in fact any) spectral section of $D_F$.
\end{defn}

Let $Q$, $Q'$ be two generalized spectral sections of $D_F$. For $b\in S$, set
\begin{equation}
T_b(Q,Q')=Q'_{b}Q_{b}:\im(Q_b)\rightarrow \im (Q'_b).
\end{equation}
Since $Q$ differs by a family of compact operators from $Q'$,
by \cite[Proposition 3.2]{MR1262254}, $T(Q,Q')=\{T_b(Q,Q')\}_{b\in S}$
defines a family of Fredholm operators. Thus according to Atiyah-Singer \cite{MR0279833},
it determines an element
\begin{equation}
[Q-Q']:=\ind T(Q,Q')\in K(S).
\end{equation}

We now recall the definition of the higher spectral flow of Dai-Zhang (cf. \cite[Definition 1.5, Theorem 1.11]{MR1638328})
for families of Dirac type operators imposed with local boundary conditions.

Consider a curve of families of formally self-adjoint Dirac operators $D_u=\{D_{u,b}\}_{b\in S}$, $u\in [0,1]$.
We impose on $D_u$, $u\in [0,1]$ the local boundary condition $F$ and
denote its realization by $D_{u,F}=\{D_{u,b,F_b}\}_{b\in S}$.

Let $Q_0$, $Q_1$ be spectral sections of $D_{0,F}$, $D_{1,F}$, respectively. If we consider the total
family $\widetilde{D_{F}}=\{D_{u,b,F_b}\}_{(u,b)\in [0,1]\times S}$, then there exists a total generalized spectral section
$\widetilde{P}=\{P_{u,b}\}_{(u,b)\in [0,1]\times S}$. Let $P_u$ be the restriction of $\widetilde{P}$ over $\{u\}\times S$.

\begin{defn}\label{defhsf}
The higher spectral flow $\SF\{(D_{0,F},Q_0),(D_{1,F},Q_1)\}$ between the pairs $(D_{0,F},Q_0)$ and $(D_{1,F},Q_1)$ is
an element in $K(S)$ defined by
\begin{equation}
\SF\{(D_{0,F},Q_0),(D_{1,F},Q_1)\}=[Q_1-P_1]-[Q_0-P_0]\in K(S).
\end{equation}
\end{defn}

\begin{rem}
It can be shown by some elementary arguments concerning Fredholm families
that the definition is independent of the choice of
the total generalized section $\widetilde{P}$; see \cite[pp. 440-441]{MR1638328} and \cite[Section 3]{MR1262254}.
\end{rem}

\subsection{Toeplitz families and higher spectral flows}\label{sec2.4}

We continue in the notations of the previous subsection.

Let $D=\{D_{b}\}_{b\in S}$ be a family of formally self-adjoint Dirac type operators
imposed with the local boundary
condition $F=\{F_b\}_{b\in S}$.

Under Basic Assumption \ref{basic}, we can pick a spectral section $P=\{P_b\}_{b\in S}$ of the family
$D_{F}=\{D_{b,F_b}\}_{b\in S}$.

Let $g: M\rightarrow U(N)$ be a smooth map from $M$ to the unitary group $U(N)$.
Then $g$ can be viewed as an automorphism of the
trivial complex vector bundle $\mathbb{C}^N|_M$ over $M$. For any $b\in S$, $g$
induces an automorphism of  $\mathbb{C}^N|_{Y_b}$. Thus for any $b\in S$, $g$ induces
a bounded map $g_b$ from
\begin{equation*}
\mathbf{H}^0(Y_b,S(TY)_b\otimes E_b\otimes \mathbb{C}^N)
\end{equation*}
to itself by acting as an identity on $\mathbf{H}^0(Y_b,S(TY)_b\otimes E_b)$.

Also it is clear that $P_b$ induces a map on $\mathbf{H}^0(Y_b,S(TY)_b\otimes E_b\otimes \mathbb{C}^N)$ by
acting as an identity on sections of $\mathbb{C}^N$. We still note this map by $P_b$.

\begin{defn}[{Compare with \cite[Definition 4.1]{MR1638328}}]
The Toeplitz family $T_{g,F}=\{T_{g,F,b}\}_{b\in S}$ is a family of Toeplitz operators defined by
\begin{equation}
\begin{split}
T_{g,F,b}&=P_bg_bP_b:P_b\,\mathbf{H}^0(Y_b,S(TY)_b\otimes E_b\otimes \mathbb{C}^N)
\\
&\hspace{80pt}\longrightarrow P_b\,\mathbf{H}^0(Y_b,S(TY)_b\otimes E_b\otimes \mathbb{C}^N).
\end{split}
\end{equation}
\end{defn}

We define the conjugation $D_g=\{D_{g,b}\}_{b\in S}$ of $D$ by $g$ as in \cite[(4.7)]{MR1638328}, i.e., set
for any $b\in S$,
\begin{equation}\label{conju}
D_{g,b}=g_bD_{b}g_b^{-1}.
\end{equation}
Clearly, $D_g$ is also a family of formally self-adjoint Dirac type operators.
We denote by $D_{g,F}=\{D_{g,b,F_b}\}_{b\in S}$ the realization of $D_g$ imposed with the local boundary condition $F$.
By Definition \ref{defsps}, $gPg^{-1}=\{g_bP_bg_b^{-1}\}_{b\in S}$ gives a spectral section of $D_{g,F}$.

By \eqref{conju}, $D_{g,b,F_b}-D_{b,F_b}$
is $D_{b,F_b}$-compact for $b\in S$; see
\cite[Chapter {\uppercase\expandafter{\romannumeral 4}}.1.3]{MR1335452} for the definition of relatively compactness.
It follows from \cite[Corollary 3.5]{MR2114489} that $gPg^{-1}-P$ is a family of compact operators.

Therefore, $T_{g,F}$ is a family of Fredholm operators\footnote{Indeed,
\begin{equation*}
T_{g,F}T_{g^{-1},F}-P=P(gPg^{-1}-P)P,
\end{equation*}
which clearly is a family of compact operators.}.
As a consequence, it defines an index bundle
\begin{equation}\label{indto}
\ind (T_{g,F})=[gPg^{-1}-P]\in K(S).
\end{equation}
As in the proof of \cite[Proposition 4.3(i)]{MR1638328},
we know that the right hand side of \eqref{indto} does not depend on the choice of $P$.

As in \cite[Section 4]{MR1638328}, the index bundle of the Toeplitz family can be interpreted via
higher spectral flow.

For any $0\leq u\leq 1$, set
\begin{equation}\label{toep}
D_{u,b}=(1-u)D_{b}+uD_{g,b}.
\end{equation}
Then we get a curve $D_u=\{D_{u,b}\}_{b\in S}$, $u\in [0,1]$ of families of formally self-adjoint
Dirac type operators.
Imposing the local boundary condition $F$ on the curve \eqref{toep},
we get a curve
\begin{equation*}
D_{u,F}=\{D_{u,b,F_b}\}_{b\in S}, u \in [0,1]
\end{equation*}
of families of self-adjoint Fredholm operators.
By Definition \ref{defhsf}, we have a higher spectral flow
\begin{equation}
\SF\big\{(D_F,P),(D_{g,F},gPg^{-1})\big\}\in K(S).
\end{equation}

Since $D_{u,b,F_b}$ is $D_{b,F_b}$-compact for $b\in S$ and $u\in [0,1]$, by \cite[Corollary 3.5]{MR2114489},
$P$ is a generalized spectral section
in the sense of Definition \ref{defgsp} for the total family $\widetilde{D_F}=\{D_{u,b,F_b}\}_{(u,b)\in [0,1]\times S}$.
Thus, by Definition \ref{defhsf}, we have
\begin{equation}\label{sf}
\SF\big\{(D_F,P),(D_{g,F},gPg^{-1})\big\}=[gPg^{-1}-P].
\end{equation}
As explained below \eqref{indto},
the right hand side of \eqref{sf} does not depend on
the choice of $P$. We will denote it by $\SF\{D_F,D_{g,F}\}$.

\begin{thm}[{Compare with \cite[Theorem 4.4]{MR1638328}}]
The following identity holds in $K(S)$,
\begin{equation}
\ind(T_{g,F})=\SF\{D_F,D_{g,F}\}.
\end{equation}
\end{thm}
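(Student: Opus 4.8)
The plan is to deduce the identity directly by comparing two descriptions of the class $[gPg^{-1}-P]\in K(S)$ that have already been assembled in the discussion above. On the one hand, \eqref{indto} identifies the index bundle of the Toeplitz family as $\ind(T_{g,F})=[gPg^{-1}-P]$. On the other hand, $\SF\{D_F,D_{g,F}\}$ was \emph{defined} to be the right-hand side of \eqref{sf}. So the whole content is to check \eqref{sf}, namely that the higher spectral flow along the affine path \eqref{toep} joining $(D_F,P)$ to $(D_{g,F},gPg^{-1})$ equals $[gPg^{-1}-P]$, and then to read off the theorem.

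To verify \eqref{sf} I would exhibit an explicit total generalized spectral section for $\widetilde{D_F}=\{D_{u,b,F_b}\}_{(u,b)\in[0,1]\times S}$. Each $D_{u,b}$ differs from $D_b$ by the bounded zeroth order self-adjoint term $u(g_bD_bg_b^{-1}-D_b)$, so $D_{u,b,F_b}-D_{b,F_b}$ is $D_{b,F_b}$-compact for all $u\in[0,1]$ and $b\in S$; by \cite[Corollary~3.5]{MR2114489} the fixed spectral section $P$ of $D_F$ is then a generalized spectral section for $\widetilde{D_F}$. Taking the constant total section $\widetilde{P}=P$ in Definition \ref{defhsf}, its restrictions satisfy $P_0=P_1=P$, and since $T_b(P,P)=P_b\colon\im(P_b)\to\im(P_b)$ is the identity we have $[P-P]=0$ in $K(S)$. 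Hence
\begin{equation*}
\SF\big\{(D_F,P),(D_{g,F},gPg^{-1})\big\}=[gPg^{-1}-P]-[P-P]=[gPg^{-1}-P],
\end{equation*}
which is \eqref{sf}. Combining this with \eqref{indto} yields $\ind(T_{g,F})=[gPg^{-1}-P]=\SF\{D_F,D_{g,F}\}$, and both sides are independent of the auxiliary choice of $P$ as recorded above.

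I do not expect any genuine obstacle inside this argument: the substantive analytic inputs — Fredholmness of $T_{g,F}$, compactness of $gPg^{-1}-P$, and the relative compactness making $P$ a legitimate generalized spectral section for the total family — have all been set up before the statement, so the theorem is really a matter of unwinding the definitions (as with its model \cite[Theorem~4.4]{MR1638328}). The only point that warrants a line of care is confirming that the path \eqref{toep} stays inside the class of formally self-adjoint Dirac type operators to which the local boundary condition $F$ applies, so that $D_{u,F}$ is an honest curve of self-adjoint Fredholm operators and Definition \ref{defhsf} is applicable; this is immediate since a convex combination of the formally self-adjoint $D_b$ and $D_{g,b}=g_bD_bg_b^{-1}$ is again formally self-adjoint.
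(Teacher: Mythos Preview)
Your proposal is correct and follows precisely the paper's own approach. The paper does not supply a separate proof environment for this theorem; the entire argument is contained in the paragraphs immediately preceding the statement, where \eqref{indto} and \eqref{sf} are derived (the latter by exactly your observation that $P$ serves as a constant total generalized spectral section for $\widetilde{D_F}$ via relative compactness and \cite[Corollary~3.5]{MR2114489}), and $\SF\{D_F,D_{g,F}\}$ is then \emph{defined} as $[gPg^{-1}-P]$.
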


The next proposition concerning the homotopy invariance of the higher spectral flow
is inspired by \cite[Proposition 2.1]{gl}.

Let $D_s$, $s\in [0,1]$ be a curve of families of formally self-adjoint Dirac type operators.
Let $g_t:M\rightarrow U(N)$, $t\in [0,1]$ be a curve of automorphisms of $\mathbb{C}^N|_M$.
For $s,t\in [0,1]$, we denote by $D_{s,g_t}$ the conjugation of $D_s$ by $g_t$ as
in \eqref{conju}.

Let $E_{+}$ and $E_{-}$ denote the positive and
the negative eigenbundles of $F$, respectively.
Set $\widetilde{F}={\rm id}|_{E_+}\oplus (-{\rm id}|_{E_-})$.
Then
\begin{equation}\label{deform}
F_v=v \widetilde{F}+(1-v)F, \ v\in [0,1]
\end{equation}
defines a curve of families of invertible and self-adjoint endomorphisms of $\{E|_{Z_b}\}_{b\in S}$, which
amounts to a curve of local boundary conditions.

For $s,t,v\in [0,1]$, we impose the local boundary condition $F_v$ on the curve
\begin{equation}
D_{s,u}(t)=(1-u)D_{s}+uD_{s,g_t},\ u\in [0,1],
\end{equation}
and get a higher spectral flow $\SF\{D_{s,F_v},D_{s,g_t,F_v}\}$.

\begin{prop}\label{hoto}
The following identity holds in $K(S)$,
\begin{equation}\label{homotopy}
\SF\{D_{0,F},D_{0,g_0,F}\}=\SF\{D_{1,F},D_{1,g_1,F}\}=\SF\{D_{1,\widetilde{F}},D_{1,g_1,\widetilde{F}}\}.
\end{equation}
\end{prop}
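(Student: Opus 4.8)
The plan is to establish the three equalities by a homotopy invariance argument for the higher spectral flow, using the fact that \eqref{sf} expresses $\SF\{D_{s,F_v},D_{s,g_t,F_v}\}$ as an index bundle $[g_tP_{s,v}g_t^{-1}-P_{s,v}]$ for any chosen spectral section $P_{s,v}$ of $D_{s,F_v}$. The key point is that an index bundle of a continuous family of Fredholm operators over a compact base is locally constant in any additional continuous parameter. So the whole proof reduces to exhibiting continuous families and invoking this stability.

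First I would set up the three-parameter family. For $(s,t,v)\in[0,1]^3$ and $b\in S$, consider the realizations $D_{s,b,F_{v,b}}$ and their conjugates $g_{t,b}D_{s,b,F_{v,b}}g_{t,b}^{-1}=D_{s,g_t,b,(g_tF_vg_t^{-1})_b}$. Here one must be slightly careful: conjugating by $g_t$ changes the boundary condition $F_v$ to $g_tF_vg_t^{-1}$, but since $g_t$ restricted to $Z_b$ is a unitary commuting with $J$ and the $\mathbb{Z}_2$-grading, $g_tF_vg_t^{-1}$ is again an invertible self-adjoint endomorphism and $g_tP_{s,v}g_t^{-1}$ is a spectral section for $D_{s,g_t,F_v}$ (this is the families analogue of the remark made just after \eqref{conju}). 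The difference $D_{s,g_t,b,F_{v,b}}-D_{s,b,F_{v,b}}$ is $D_{s,b,F_{v,b}}$-compact, uniformly enough that by \cite[Corollary 3.5]{MR2114489} the family $g_tP_{s,v}g_t^{-1}-P_{s,v}$ is a continuous family of compact operators in all parameters $(s,t,v)$. Hence the Toeplitz-type family $P_{s,v}g_tP_{s,v}$ on $\mathrm{im}(P_{s,v})$ is a continuous family of Fredholm operators parameterized by $S\times[0,1]^3$, so its index bundle in $K(S\times[0,1]^3)$ restricts to the same class in $K(S)$ over every slice $\{(s,t,v)\}\times S$. This is the conceptual heart of the argument.

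Next I would read off the three equalities from this single stability statement by connecting the relevant slices by paths in $[0,1]^3$. The first equality $\SF\{D_{0,F},D_{0,g_0,F}\}=\SF\{D_{1,F},D_{1,g_1,F}\}$ comes from moving along the diagonal path $r\mapsto(s,t)=(r,r)$ with $v=0$ fixed; by \eqref{sf} both sides equal $[g_rP_{r,0}g_r^{-1}-P_{r,0}]$, which is constant in $r$. The second equality $\SF\{D_{1,F},D_{1,g_1,F}\}=\SF\{D_{1,\widetilde F},D_{1,g_1,\widetilde F}\}$ comes from moving along $v\mapsto(s,t,v)=(1,1,v)$, using $F_0=F$, $F_1=\widetilde F$ from \eqref{deform}; along this path the index bundle of $P_{1,v}g_1P_{1,v}$ is again constant. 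One should note that a spectral section $P_{s,v}$ varying continuously with $(s,v)$ exists because the Basic Assumption \ref{basic} is stable under the homotopy $F_v$ — indeed $\widetilde F$ and $F$ have the same positive/negative eigenbundles $E_\pm$, so the local boundary conditions $F_v$ all give the same index bundle $\mathrm{ind}(D^Y_{F_v})\in K^1(S)$, which vanishes; hence a total spectral section over $S\times[0,1]_s\times[0,1]_v$ exists, and its restrictions serve as the $P_{s,v}$. Since $\SF\{D_F,D_{g,F}\}$ was shown below \eqref{sf} to be independent of the choice of spectral section, the particular continuous choice is harmless.

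The main obstacle I anticipate is the uniform relative-compactness bookkeeping needed to justify that $g_tP_{s,v}g_t^{-1}-P_{s,v}$ depends continuously on $(s,t,v)$ as a family of compact operators — i.e., promoting the pointwise statement of \cite[Corollary 3.5]{MR2114489} to a statement with parameters. One handles this by noting that $(s,t,v)\mapsto D_{s,g_t,b,F_{v,b}}-D_{s,b,F_{v,b}}$ is a continuous family of bundle endomorphisms with compact support near $\partial Y_b$ (a zeroth-order operator with smooth coefficients), and that relative compactness of such perturbations, together with continuity of the resolvents, gives norm-continuity of the associated projection differences; this is exactly the type of argument underlying the well-definedness remark after Definition \ref{defhsf} and can be cited from \cite[Section 3]{MR1262254} and \cite{MR2114489}. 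Everything else is formal manipulation of index bundles and the already-established identity \eqref{sf}.
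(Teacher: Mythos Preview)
Your approach is correct but proceeds along a genuinely different route from the paper. The paper's proof is much terser: it simply cites \cite[Theorem 7.16]{MR2536846} to assert that the four-parameter family of Fredholm realizations
\[
(u,s,t,v)\longmapsto D_{s,u,F_v}(t)=(1-u)D_{s,F_v}+uD_{s,g_t,F_v}
\]
is fiberwise continuous in the graph topology, and then invokes the families analogue of \cite[Proposition 2.1]{gl} for homotopy invariance of (higher) spectral flow. You instead pass through the Toeplitz interpretation \eqref{sf}, build a total spectral section over $S\times[0,1]_s\times[0,1]_v$ (using that the $K^1$-index is constant along the homotopy $F_v$), and then argue constancy of the index bundle of the Fredholm family $P_{s,v}g_tP_{s,v}$ over $S\times[0,1]^3$. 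Your route makes the mechanism more explicit---continuity of the compact differences $g_tP_{s,v}g_t^{-1}-P_{s,v}$ forces constancy of the index bundle---and stays within the Toeplitz/index-bundle framework already set up in Section~\ref{sec2.4}; the paper's route is shorter but leans on heavier external results about graph-norm continuity of boundary value realizations.

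Two minor corrections that do not affect your argument: first, conjugation by $g_t$ does \emph{not} change the boundary condition here, since $g_t$ acts on the trivial $\mathbb{C}^N$ factor while $F_v$ acts on $E$, so $g_tF_vg_t^{-1}=F_v$ automatically (this is why $g_tP_{s,v}g_t^{-1}$ is a spectral section for $D_{s,g_t,F_v}$ without any adjustment). Second, the zeroth-order perturbation $D_{s,g_t}-D_s=g_t[D_s,g_t^{-1}]$ is a global bundle endomorphism, not one supported near $\partial Y_b$; its relative compactness comes simply from the fact that the boundary realization has compact resolvent, not from any support condition.
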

\begin{proof}
By \cite[Theorem 7.16]{MR2536846}, we know that the Fredholm families
\begin{equation}
(u,s,t,v)\longmapsto D_{s,u,F_v}(t)=(1-u)D_{s,F_v}+uD_{s,g_t,F_v}
\end{equation}
are (fiberwise) continuous in the graph topology. Thus, we can easily modify the
argument in the proof of  \cite[Proposition 2.1]{gl} where the case of single operators
is treated to the families case, and deduce \eqref{homotopy}.
\end{proof}

With the help of the preceding proposition we can now prove the following
families extension of an intermediary result that appears in the second proof of \cite[Corollary 3.4]{gl}.

We write $F>0$, if for any $b\in S$, all the eigenvalues of $F_b$ are positive.

\begin{prop}\label{y7}
If $F>0$ or $-F>0$, then
\begin{equation}\label{y2}
\SF\big\{D_F,D_{g,F}\big\}=0 \quad \text{in}\ K(S).
\end{equation}
\end{prop}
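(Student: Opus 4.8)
The plan is to strip off the given boundary condition using the homotopy invariance already at hand, then recognize the remaining higher spectral flow as a quantity carried by the boundary, which vanishes because $\partial M$ bounds $M$.

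\emph{Reduction to $\widetilde F=\pm\mathrm{id}|_E$.} Applying Proposition \ref{hoto} with the constant curves $D_s\equiv D$ and $g_t\equiv g$, the last equality in \eqref{homotopy} becomes $\SF\{D_F,D_{g,F}\}=\SF\{D_{\widetilde F},D_{g,\widetilde F}\}$. If $F>0$ the negative eigenbundle $E_-$ is zero, so $\widetilde F=\mathrm{id}|_E$; if $-F>0$ then $E_+$ is zero and $\widetilde F=-\mathrm{id}|_E$. Moreover $i\gamma$, with $\gamma$ the $\mathbb Z_2$-grading operator of $S(TY)$, is unitary, commutes with $g$, carries $\dom(D_{b,\mathrm{id}})$ onto $\dom(D_{b,-\mathrm{id}})$ (it turns $s_-|_{Z_b}=Js_+|_{Z_b}$ into $s_-|_{Z_b}=-Js_+|_{Z_b}$) and satisfies $(i\gamma)D_{u,b}(i\gamma)^{-1}=-D_{u,b}$ along the whole curve, so $\SF\{D_{-\mathrm{id}},D_{g,-\mathrm{id}}\}=-\SF\{D_{\mathrm{id}},D_{g,\mathrm{id}}\}$; thus it suffices to show $\SF\{D_{\mathrm{id}},D_{g,\mathrm{id}}\}=0$.

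\emph{Reduction to the boundary.} The condition $s_-|_{Z_b}=Js_+|_{Z_b}$ is the reflecting (``transmission'') condition: $D_{b,\mathrm{id}}$ is unitarily equivalent to the restriction of the Dirac operator $\widehat D_b$ on the fibrewise double $\widehat Y_b=Y_b\cup_{Z_b}\overline{Y_b}$ to the $(-1)$-eigenbundle of the reflection involution, compatibly with the conjugation curve $D_{u,b}=(1-u)D_b+ug_bD_bg_b^{-1}$ and with the extension of $g$ over the doubled fibration by reflection (which is continuous across $\partial M$; smoothing it is a fibrewise compact perturbation, so it does not alter the higher spectral flow). Running the Getzler/heat-kernel computation on the collar as in \cite{gl} and \cite[Section 4]{MR1638328} in this families setting expresses $\SF\{D_{\mathrm{id}},D_{g,\mathrm{id}}\}$ as, up to sign, the fibrewise Dirac push-forward $\pi^{Z}_{!}$ of the class $[g|_{\partial M}]\in K^1(\partial M)$, twisted only by fixed data coming from $D^Z$ and $E$; at a point this is simply an integral over the odd dimensional fibres $Z_b$ of the restriction to $\partial M$ of a form on $M$.

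\emph{Vanishing.} Since $g$ is defined on all of $M$, the class $[g|_{\partial M}]$ lies in the image of the restriction map $r\colon K^1(M)\to K^1(\partial M)$; and the composite of $r$ with the fibrewise boundary Dirac push-forward is zero, because $\pi^Z_!$ factors through the connecting map $\delta\colon K^1(\partial M)\to K^0(M,\partial M)$ and the fibrewise push-forward of the open fibration, while $\delta r=0$ by exactness of the long exact sequence of $(M,\partial M)$. (This is the $K$-theoretic form of the fact that the restriction of a form to a boundary integrates to zero.) Hence $\SF\{D_{\mathrm{id}},D_{g,\mathrm{id}}\}=0$ in $K(S)$, and with the first step this proves the proposition.

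\emph{Main obstacle.} The work is entirely in the second step: converting the doubling picture into an honest identification of $D_{b,\mathrm{id}}$ with a summand of $\widehat D_b$, uniformly in $b\in S$ and along the curve $\{D_{u,b}\}$, and carrying the families heat-kernel analysis far enough to land on the genuine boundary push-forward --- crucially at the level of $K(S)$, not merely of Chern characters. This last point is exactly where the families statement goes beyond the single operator result of \cite{gl}, where the integral formula already lives in the torsion free group $K(\mathrm{pt})=\mathbb Z$.
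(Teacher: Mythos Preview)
Your outline is not a proof, and the gap you flag in your own ``Main obstacle'' paragraph is fatal here. Step two is the whole argument: you need an identity in $K(S)$ between $\SF\{D_{\mathrm{id}},D_{g,\mathrm{id}}\}$ and a boundary push-forward of $[g|_{\partial M}]$. The heat-kernel/Getzler route you invoke only delivers this at the level of Chern characters (as the introduction of the paper explicitly notes), so your $K$-theoretic exactness argument in step three has nothing to act on. Upgrading that Chern-character identity to a $K$-theoretic one is precisely the content of Theorem~\ref{main}, and the paper's proof of Theorem~\ref{main} \emph{uses} the present proposition (via Proposition~\ref{y20}); so appealing to it here would be circular. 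The doubling picture you sketch does not sidestep this: even granting the identification of $D_{b,\mathrm{id}}$ with a summand of $\widehat D_b$, you still have to extract from that a $K(S)$-level statement, and you have not done so.

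The paper's proof is entirely different and far more elementary: one perturbs the whole curve by the grading operator $\gamma$, i.e.\ considers $(D_u+\gamma)_F$, and shows directly that every operator in this curve is invertible when $F>0$ or $-F>0$. Green's formula applied to $s=s_+\oplus s_-\in\ker(D_u+\gamma)_F$ together with the sign condition on $F$ forces $s|_{Z_b}=0$; a second application of Green's formula and the anticommutation $D_u\gamma+\gamma D_u=0$ then give $\|D_us\|^2+\|s\|^2=0$, so $s=0$. Invertibility of the curve makes the higher spectral flow vanish, and Proposition~\ref{hoto} transfers this back to the original curve. No boundary reduction, no heat kernels, no push-forwards are needed.
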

\begin{proof}
Let $\gamma$ be the grading operator on $S(TY)\otimes E$ such that
\begin{equation*}
\gamma|_{S_+(TY)\otimes E}={\rm id},\ \ \gamma|_{S_-(TY)\otimes E}=-{\rm id}.
\end{equation*}

We claim the fiberwise invertibility of the curve $(D+\gamma)_{u,F}$, $u\in [0,1]$ obtained by imposing the local boundary condition
$F$ on the curve
\begin{equation}
\begin{split}
(D+\gamma)_u
&=(1-u)(D+\gamma)+ug(D+\gamma)g^{-1}
\\
&=D_u+\gamma,\hspace{10pt} u\in [0,1].
\end{split}
\end{equation}
It follows the claim that the associated higher spectral flow vanishes.
Due to Proposition \ref{hoto}, \eqref{y2} is proved.

We now turn to the proof of our claim.

For $b\in S$, $s=s_+\oplus s_-\in \ker\big((D+\gamma)_{u,b,F_b}\big)$, we have
\begin{equation}\label{y24}
(D_{u,b}+\gamma)s_+=0,\ \ (D_{u,b}+\gamma)s_-=0, \ \ s_-|_{Z_b}=JF_bs_+|_{Z_b}.
\end{equation}
Applying Green's formula \cite[Proposition 3.4(b)]{MR1233386}, we deduce that
\begin{equation}
\begin{split}\label{y23}
&\big\langle(D_{u,b}+\gamma)(s_+\oplus 0),0\oplus s_-\big\rangle_{L^2(Y_b)}
\\
&\hspace{40pt}-\big\langle s_+\oplus 0,(D_{u,b}+\gamma)(0\oplus s_-)\big\rangle_{L^2(Y_b)}
\\
&=-\big\langle Js_+|_{Z_b},s_-|_{Z_b}\big\rangle_{L^2(Z_b)},
\end{split}
\end{equation}
where by $\langle\cdot,\cdot\rangle_{L^2(Y_b)}$ and $\langle\cdot,\cdot\rangle_{L^2(Z_b)}$
we mean the $L^2$ inner products naturally defined on $Y_b$ and $Z_b$, respectively.
Combining \eqref{y24} with \eqref{y23}, we infer that $s_+|_{Z_b}=s_-|_{Z_b}=0$,
since $F_b$ (or $-F_b$) is positive.

Now that $s|_{Z_b}=0$, it follows from Green's formula that
\begin{equation}
\begin{split}\label{y44}
&\big\langle D_{u,b}s,\gamma s\big\rangle_{L^2(Y_b)}+\big\langle\gamma s,D_{u,b}s\big\rangle_{L^2(Y_b)}
\\
&\hspace{30pt}=\big\langle \big(D_{u,b}\gamma + \gamma D_{u,b}\big)s, s\big\rangle_{L^2(Y_b)},
\end{split}
\end{equation}
which is equal to zero, since $D_{u,b}$ obviously anticommutes with $\gamma$.
On the other hand, it is trivial to see that
\begin{equation}\label{y45}
\big\langle(D_{u,b}+\gamma)s,(D_{u,b}+\gamma)s\big\rangle_{L^2(Y_b)}=0.
\end{equation}
By substituting \eqref{y44} into \eqref{y45}, we obtain
\begin{equation}
\big\langle D_{u,b}s,D_{u,b}s\big\rangle_{L^2(Y_b)}+\big\langle\gamma s,\gamma s\big\rangle_{L^2(Y_b)}=0,
\end{equation}
which implies $s|_{Y_b}=0$ as claimed.
\end{proof}

\section{Main result and its Proof}\label{sec3}

\subsection{Main Result}\label{sec3.1}

We first recall some notations that we use in last section.

Let $\pi: M\rightarrow S$ be a smooth fibration with the typical fiber
an even dimensional compact spin manifold $Y$ with boundary $Z$ and $S$ compact.

Let $g^{TY}$ be a metric on $TY$, and $(E,g^E)$ be a complex Hermitian
vector bundle over $M$ endowed with a Hermitian connection $\nabla^E$.

Recall that $D^Y=\{D^Y_b\}_{b\in S}$ is
a family of fiberwise twisted Dirac operators
on $S(TY)\otimes E$ over $\{Y_b\}_{b\in S}$ defined in \eqref{d1}.
Let $D^Y_g$ be the conjugation of $D^Y$ by $g$
defined as in \eqref{conju} for a smooth map $g: M\rightarrow U(N)$.

Given a family $F=\{F_b\}_{b\in S}$ of invertible and self-adjoint endomorphisms of $\{E|_{Z_b}\}_{b\in S}$ which amounts to
a local boundary condition, as in \eqref{lbd} and \eqref{y22}, we obtain the self-adjoint
realizations $D^Y_F$ of $D^Y$ and $D^Y_{g,F}$ of $D^Y_g$. Furthermore, we have a higher spectral flow
$\SF\{D^Y_F,D^Y_{g,F}\}$ as in Section \ref{sec2.4}, provided that
we make Basic Assumption \ref{basic}.

Let $\Pi_{+}$ and $\Pi_{-}$ be the bundle projections on to the positive eigenbundle $E_+$
and the negative eigenbundle $E_-$ of $F$, respectively. Set
\begin{equation}
g^{E_\pm}=g^E|_{E_\pm}\,, \
\nabla^{E_+}=\Pi_{+}\nabla^E|_Z\,\Pi_{+}\,, \
\nabla^{E_-}=\Pi_{-}\nabla^E|_Z\,\Pi_{-}\,. \
\end{equation}
Then $\nabla^{E_\pm}$ preserve the Hermitian metrics $g^{E_\pm}$ on $E_{\pm}$, respectively.

Recall that the connection $\nabla^{S(TZ)}$ on $TZ$ is introduced below \eqref{yy5} and
$D^Z$ is a family of self-adjoint twisted Dirac operators on $S(TZ)\otimes E$
defined in terms of $\nabla^{S(TZ)}$ and $\nabla^{E}$. Let $B^Z_{\pm}$ be
the families of self-adjoint twisted Dirac operators on $S(TZ)\otimes E_\pm$
defined using $\nabla^{S(TZ)}$ and $\nabla^{E_\pm}$ in a similar way to $D^Z$. Set
\begin{equation}\label{bdeq}
B^Z=B^Z_{+}\oplus B^Z_{-}.
\end{equation}
It is straightforward to show that $B^Z$ differs by a family of
differential operators of zeroth order from $D^{Z}$.

We define the conjugations $B^Z_{\pm,g|_{\partial M}}$ and $D^Z_{g|_{\partial M}}$ as in \eqref{conju}. Set
\begin{equation}
B^Z_{\pm,g|_{\partial M}}=g|_{\partial M}B^Z_{\pm}g^{-1}|_{\partial M},\
D^Z_{g|_{\partial M}}=g|_{\partial M}D^Z g^{-1}|_{\partial M}.
\end{equation}
As will be explained in Section \ref{sec3.3}, under
Basic Assumption \ref{basic}, the spectral sections for $B^Z_{\pm}$, $D^Z$ always exist.
By \cite[Definition 1.5]{MR1638328}, we have higher spectral flows
\begin{equation*}
\SF\{B^Z_{+},B^Z_{+,g|_{\partial M}}\},
\ \SF\{B^Z_{-},B^Z_{-,g|_{\partial M}}\}
\ \text{and}\ \SF\{D^Z,D^Z_{g|_{\partial M}}\}
\end{equation*}
associated to the curves
\begin{align}\label{y14}
B^Z_{\pm,u}&=(1-u)B^Z_\pm+uB^Z_{\pm,g|_{\partial M}}\,,\ u\in [0,1],
\\
D^Z_u&=(1-u)D^Z+uD^Z_{g|_{\partial M}}\,, \ u\in [0,1],
\end{align}
of families of self-adjoint twisted Dirac type operators over $\{Z_b\}_{b\in S}$.

We are now in a position to state our main result which is a generalization of
\cite[Theorems 3.3]{gl} to the families case.

\begin{thm}\label{main}
The following identity holds in $K(S)$,
\begin{equation}\label{maeq}
\SF\{D^Y_F,D^Y_{g,F}\}=\SF\{B^Z_{+},B^Z_{+,g|_{\partial M}}\}=-\SF\{B^Z_{-},B^Z_{-,g|_{\partial M}}\}.
\end{equation}
\end{thm}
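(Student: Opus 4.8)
The plan is to follow the strategy announced in the introduction: reduce the higher spectral flow of the families $\{D^Y_{u,F}\}$ on the manifold with boundary to a computation on families of finite cylinders, via a splitting-type formula for the higher spectral flow that adapts \cite[Section 3]{MR1447295} and the Bunke-type argument of \cite{MR1331973}. Before invoking the splitting machinery, I would first exploit the homotopy invariance established in Proposition \ref{hoto} to replace the given boundary condition $F$ by the ``model'' boundary condition $\widetilde F = \mathrm{id}|_{E_+}\oplus(-\mathrm{id}|_{E_-})$, so that $\SF\{D^Y_F,D^Y_{g,F}\}=\SF\{D^Y_{\widetilde F},D^Y_{g,\widetilde F}\}$. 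With $\widetilde F$ the decomposition $E|_Z=E_+\oplus E_-$ is respected and, near the boundary, after deforming the metric and connection to be of product type in a collar $Z\times[0,1)$, the boundary condition $\widetilde F$ on $S(TY)\otimes E$ splits into a copy of the Atiyah--Patodi--Singer-type setup associated to $B^Z_+$ on the $E_+$ summand and to $B^Z_-$ on the $E_-$ summand (with an orientation/sign flip on the latter coming from the factor $J$). This is exactly the mechanism by which the single sign $\SF\{B^Z_+,\cdots\}=-\SF\{B^Z_-,\cdots\}$ will emerge.

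Next I would carry out the splitting. Glue to the boundary a family of finite cylinders $Z\times[0,L]$ (or pass to a neck-stretching limit), obtaining a family of closed-up problems on $Y_b\cup(Z_b\times[0,L])$; on the cylinder the operator is $J(\partial_r + B^Z_u)$ type after the identifications made above. The splitting formula I want says that the higher spectral flow on the glued-up family equals the higher spectral flow of the original family on $Y$ plus a contribution from the cylinder, and that the cylinder contribution is governed entirely by the family of boundary operators $\{B^Z_{\pm,u}\}$. Since $D^Y_1 = gD^Y_0 g^{-1}$ is a unitary conjugate of $D^Y_0$, the glued-up spectral flow can be arranged to vanish (this is the analogue of Proposition \ref{y7}: conjugation produces no net higher spectral flow once the problem is closed), leaving $\SF\{D^Y_F,D^Y_{g,F}\}$ equal to minus the cylinder contribution, which in turn is identified with $\SF\{B^Z_+,B^Z_{+,g|_{\partial M}}\}$ on the $E_+$ piece. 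The $E_-$ piece gives the opposite sign because of the orientation reversal built into $\widetilde c(v)=-Jc(v)$, yielding the second equality in \eqref{maeq}. The fact that $B^Z$ and $D^Z$ differ by a zeroth-order family, together with homotopy invariance of the higher spectral flow under such perturbations, lets one freely pass between $B^Z_\pm$ and the boundary restriction of $D^Z$; this is why the statement can equivalently be phrased with $D^Z$.

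The steps, in order, would be: (1) reduce to $\widetilde F$ using Proposition \ref{hoto}; (2) deform metric, connection and $g$ to be of product type near $Z$, using homotopy invariance again to see the higher spectral flow is unchanged; (3) under the product structure, identify the collar part of $D^Y_{\widetilde F}$ with the direct sum of cylinder operators built from $B^Z_+$ and $B^Z_-$, tracking the sign from $J$; (4) establish the splitting formula for the higher spectral flow by adapting \cite[Section 3]{MR1447295}, i.e. expressing $\SF$ of the glued family in terms of $\SF$ on $Y$ plus a cylinder term, using that a total generalized spectral section for the glued family is built from spectral sections of the boundary family $B^Z_\pm$; (5) observe the glued-up family, being a unitary conjugation of itself at the two endpoints, has vanishing higher spectral flow (as in Proposition \ref{y7}); (6) solve for $\SF\{D^Y_F,D^Y_{g,F}\}$ and read off both equalities. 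I expect step (4)---proving the splitting formula at the level of $K$-theory rather than just for Chern characters---to be the main obstacle: one must produce the relevant total generalized spectral sections over $[0,1]\times S$ compatibly with the gluing, control the $D_{b,F_b}$-relative compactness of all perturbations uniformly as the neck is stretched (so that Fredholmness in the graph topology is preserved, via \cite{MR2536846} and \cite[Corollary 3.5]{MR2114489}), and verify that the index-bundle bookkeeping $[Q-P]$ adds correctly under gluing; the $L\to\infty$ limit and the identification of the limiting cylinder operator with the model $J(\partial_r+B^Z_u)$ are the places where care is most needed.
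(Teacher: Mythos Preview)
Your overall architecture---reduce to $\widetilde F$, reduce to product structure, split off a cylinder piece, and use Proposition~\ref{y7} to kill the rest---matches the paper's. But two points in your description are off in ways that matter.

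First, you misread Proposition~\ref{y7}. It does \emph{not} say that conjugation by $g$ produces no spectral flow ``once the problem is closed''; indeed $\SF\{D^Y_F,D^Y_{g,F}\}$ is generically nonzero, which is the whole content of the theorem. What Proposition~\ref{y7} says is that the spectral flow vanishes when the boundary condition is \emph{definite} ($F>0$ or $-F>0$). In the paper this is applied not to a closed-up manifold but to the \emph{complement} piece $Y(2\epsilon)$, which after cutting carries the definite boundary condition $\pm\mathrm{id}$ on its newly created boundary, and to an auxiliary short cylinder $Z_{[2\epsilon,6\epsilon]}$ with boundary conditions $\{\mathrm{id},\mathrm{id}\}$.

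Second, the paper neither glues a cylinder to $Y$ nor stretches a neck. It \emph{cuts} $Y$ along an interior slice of the existing collar and uses Bunke's unitary $W$ (built from cut-offs $\xi,\gamma$ with $\xi^2+\gamma^2=1$) to exhibit
\[
D^Y_{\widetilde F}\oplus D^{Z_{[2\epsilon,6\epsilon]}}_{\{\mathrm{id},\,\mathrm{id}\}}
\quad\text{and}\quad
D^{Z_{[0,6\epsilon]}}_{\{\widetilde F,\,\mathrm{id}\}}\oplus D^{Y(2\epsilon)}_{\mathrm{id}}
\]
as unitarily equivalent up to a zeroth-order term. Proposition~\ref{hoto} then equates the two spectral flows, and Proposition~\ref{y7} kills the two pieces with definite boundary conditions, leaving $\SF\{D^Y_{\widetilde F},D^Y_{g,\widetilde F}\}=\SF\{D^{Z_{[0,6\epsilon]}}_{\{\widetilde F,\,\mathrm{id}\}},D^{Z_{[0,6\epsilon]}}_{g,\{\widetilde F,\,\mathrm{id}\}}\}$. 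This is a purely finite algebraic identity; there is no $L\to\infty$ limit, so the uniform-compactness worries in your step (4) do not arise.

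Finally, you leave the cylinder computation as a black box, but it is where the boundary operator and the sign actually appear. The paper splits the cylinder operator according to $E|_Z=E_+\oplus E_-$; Proposition~\ref{y7} again kills the $E_+$ summand (both its boundary conditions are $+\mathrm{id}$), and on the $E_-$ summand one separates variables using an eigenbasis of $B^Z_{-,u}$, solves the resulting ODE boundary value problem on $[0,6\epsilon]$ explicitly, and obtains an intrinsic decomposition $D^{Z_{[0,6\epsilon]}}_{-,u,\{-\mathrm{id},\,\mathrm{id}\}}=(-B^Z_{-,u})\oplus\mathcal{A}_u$ with $\mathcal{A}_u$ invertible for all $u$. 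This is what identifies the cylinder spectral flow with $-\SF\{B^Z_-,B^Z_{-,g|_{\partial M}}\}$ and fixes the sign; it is not a consequence of the $J$ orientation flip alone.
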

\begin{proof}
The proof will be given in Sections \ref{sec3.2}-\ref{sec3.3}.
\end{proof}

\begin{cor}[Cobordism invariance of higher spectral flow] One has,
\begin{equation}\label{coreq}
\SF\{D^Z,D^Z_{g|_{\partial M}}\}=0 \quad \text{in}\ K(S).
\end{equation}
\end{cor}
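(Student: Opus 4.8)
The plan is to combine Theorem~\ref{main} with two routine facts about the higher spectral flow: its additivity under orthogonal direct sums, and its invariance under homotopies of the underlying family through formally self-adjoint Dirac type operators. First I would read off from the last two equalities in \eqref{maeq} that
\begin{equation*}
\SF\{B^Z_{+},B^Z_{+,g|_{\partial M}}\}+\SF\{B^Z_{-},B^Z_{-,g|_{\partial M}}\}=0\quad\text{in }K(S).
\end{equation*}
In the Toeplitz construction the unitary $g$ acts only on the auxiliary factor $\mathbb{C}^N$, so conjugation by $g|_{\partial M}$ respects the splitting \eqref{bdeq}; hence $B^Z_{g|_{\partial M}}=B^Z_{+,g|_{\partial M}}\oplus B^Z_{-,g|_{\partial M}}$, and the curve $B^Z_u:=(1-u)B^Z+uB^Z_{g|_{\partial M}}=B^Z_{+,u}\oplus B^Z_{-,u}$, $u\in[0,1]$, defines a higher spectral flow $\SF\{B^Z,B^Z_{g|_{\partial M}}\}$. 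Since orthogonal sums of (generalized) spectral sections of the two summands are again (generalized) spectral sections of the corresponding total family over $[0,1]\times S$, and since under a sum $Q=Q_+\oplus Q_-$, $P=P_+\oplus P_-$ the Fredholm family $T(Q,P)=QP$ splits as $T(Q_+,P_+)\oplus T(Q_-,P_-)$, the defining index bundles add and therefore
\begin{equation*}
\SF\{B^Z,B^Z_{g|_{\partial M}}\}=\SF\{B^Z_{+},B^Z_{+,g|_{\partial M}}\}+\SF\{B^Z_{-},B^Z_{-,g|_{\partial M}}\}=0.
\end{equation*}

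Next I would transfer this from $B^Z$ to $D^Z$. Since $B^Z-D^Z$ is a family of zeroth-order differential operators, i.e.\ a smooth family of formally self-adjoint bundle endomorphisms over the closed fibration $\{Z_b\}_{b\in S}$, the affine path $C_t=(1-t)D^Z+tB^Z$, $t\in[0,1]$, is a curve of families of formally self-adjoint Dirac type operators, all carrying the principal symbol of $D^Z$. Under Basic Assumption~\ref{basic} one has $\ind(D^Z)=0$ in $K^1(S)$ (see Section~\ref{sec3.3}), and since the index bundle is a homotopy invariant this forces $\ind(C_t)=0$ for every $t$; applying \cite[Proposition 1]{MR1472895} to the family over $[0,1]\times S$ in the parameter $t$ then produces a spectral section $\{P^{(t)}\}_{t\in[0,1]}$ depending continuously on $t$. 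For each fixed $t$ the conjugation path $(1-u)C_t+uC_{t,g|_{\partial M}}$, $u\in[0,1]$, is a $C_t$-compact perturbation of $C_t$, because $C_{t,g|_{\partial M}}-C_t$ is once more a zeroth-order operator; hence, exactly as in \eqref{sf},
\begin{equation*}
\SF\{C_t,C_{t,g|_{\partial M}}\}=\bigl[\,g|_{\partial M}P^{(t)}g^{-1}|_{\partial M}-P^{(t)}\,\bigr]=\ind\bigl(P^{(t)}g|_{\partial M}P^{(t)}\bigr)\in K(S).
\end{equation*}
The operators $P^{(t)}g|_{\partial M}P^{(t)}$ form a continuous family of Fredholm operators parameterized by $[0,1]\times S$, so their index bundle over $S$ is independent of $t$; evaluating at $t=0$ and $t=1$ gives $\SF\{D^Z,D^Z_{g|_{\partial M}}\}=\SF\{B^Z,B^Z_{g|_{\partial M}}\}=0$, which is \eqref{coreq}.

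The genuinely delicate point is this last step: one must check that the whole apparatus of Section~\ref{sec2} — existence of spectral sections and continuity of the Fredholm and Toeplitz families in the graph topology — applies to the two-parameter family $(t,u)\mapsto(1-u)C_t+uC_{t,g|_{\partial M}}$ on the closed fibers $\{Z_b\}_{b\in S}$. This is, however, precisely the closed-manifold setting of Dai-Zhang \cite{MR1638328}, with no local boundary condition intervening, so it goes through by the same arguments already used here (compare the proof of Proposition~\ref{hoto}); the additivity invoked in the first paragraph is equally elementary. Alternatively, one could dispense with $C_t$ and instead run the two-parameter homotopy invariance of the higher spectral flow directly, as in \cite[Section 3]{MR1447295}.
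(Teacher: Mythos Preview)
Your argument is correct and follows the same overall strategy as the paper: deduce from \eqref{maeq} that the two $B^Z_\pm$-spectral flows cancel, and then pass from $B^Z$ to $D^Z$ using that they share the same principal symbol. The only difference is packaging: the paper condenses your additivity and homotopy steps into a single citation of \cite[Proposition 4.3(ii)]{MR1638328}, which asserts precisely that the gauge higher spectral flow $\SF\{D,D_g\}$ depends only on the principal symbol of $D$, whereas you have effectively reproved the relevant instance of that proposition via the affine path $C_t$ and the Toeplitz interpretation \eqref{sf}.
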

\begin{proof}
Since $D^{Z}$ and $B$ have the same principal symbol,
\eqref{coreq} is a direct consequence of \eqref{maeq} and \cite[Proposition 4.3(ii)]{MR1638328}.
\end{proof}

We now indicate some characteristic forms that we will use.

Recall that $\nabla^{TZ}$ is a connection on $TZ$ defined below \eqref{yy5}. Let $R^{TZ}$ be the
curvature of $\nabla^{TZ}$. We define
$\widehat{A}(TZ,\nabla^{TZ})$ as in \cite[(1.19)]{MR1864735},
\begin{equation}
\widehat{A}(TZ,\nabla^{TZ})=\det
\left(\left(\frac{\frac{\sqrt{-1}}{4\pi}R^{TZ}}{\sinh (\frac{\sqrt{-1}}{4\pi}R^{TZ})}\right)^{1/2}\,\right).
\end{equation}

Let $R^{E_+}$ be the curvature of $\nabla^{E_+}$, and $\ch(E_+,\nabla^{E_+})$ be the Chern character form defined
as in \cite[(1.22)]{MR1864735},
\begin{equation}
\ch(E_+,\nabla^{E_+})=\tr\Big[\exp\Big(\frac{\sqrt{-1}}{2\pi}R^{E_+}\Big)\Big].
\end{equation}
We also define $\ch(E_-,\nabla^{E_-})$ in a similar way.

Let $d$ be the trivial Hermitian connection on $\mathbb{C}^N|_{M}$.
Let $\ch(g,d)$ denote the odd Chern character form of $g$ defined as in \cite[(1.50)]{MR1864735} (see also \cite[Section 1]{MR1231957}),
\begin{equation}
\ch(g,d)=\sum_{n=0}^{\infty}\left(\frac{1}{2\pi\sqrt{-1}}\right)^{\frac{n+1}{2}}\frac{n!}{(2n+1)!}\tr\big[(g^{-1}dg)^n\big].
\end{equation}

\begin{cor}\label{cormain}
The following identity holds in $H^*(S)$,
\begin{equation}
\begin{split}\label{yy4}
\ch(\SF\{D^Y_F,D^Y_{g,F}\})
&=-\int_Z\widehat{A}(TZ,\nabla^{TZ})\ch(E_+,\nabla^{E_+})\ch(g,d)
\\
&=\int_Z\widehat{A}(TZ,\nabla^{TZ})\ch(E_-,\nabla^{E_-})\ch(g,d).
\end{split}
\end{equation}
\end{cor}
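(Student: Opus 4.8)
The plan is to deduce Corollary \ref{cormain} from the $K$-theoretic identity of Theorem \ref{main} by applying the Chern character homomorphism $\ch\colon K(S)\to H^{\mathrm{even}}(S;\mathbb{Q})$ to both sides. Since the Chern character is a functorial ring homomorphism, it suffices to compute $\ch$ of one of the higher spectral flows appearing in \eqref{maeq}; we choose $\SF\{B^Z_{+},B^Z_{+,g|_{\partial M}}\}$, which by the discussion in Section \ref{sec2.4} equals the index bundle $\ind(T_{g|_{\partial M},+})$ of the Toeplitz family built from a spectral section $P_+$ of $B^Z_{+}$ and the unitary $g|_{\partial M}$. So the corollary will follow once we have a local index formula computing $\ch$ of such a Toeplitz index bundle over the closed family $\{Z_b\}_{b\in S}$ of odd-dimensional manifolds — and this is precisely the content of the odd families index theorem of Dai-Zhang \cite[Theorem 4.5]{MR1638328} (the family version of the Getzler/odd Atiyah-Singer formula), which gives
\begin{equation*}
\ch\big(\ind(T_{g|_{\partial M},+})\big)=-\int_Z\widehat{A}(TZ,\nabla^{TZ})\,\ch(E_+,\nabla^{E_+})\,\ch(g,d)
\end{equation*}
up to fixing orientation and normalization conventions so that the half-cylinder/boundary sign matches \cite[Theorem 3.3]{gl}.

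Concretely, the steps are: (i) recall that $\SF\{B^Z_{+},B^Z_{+,g|_{\partial M}}\}=\ind(T_{g|_{\partial M},+})\in K(S)$ by \eqref{sf}–\eqref{indto} applied to the closed-fiber family $B^Z_{+}$ (here there is no boundary condition to impose since $Z_b$ is closed, so the construction simplifies, and Basic Assumption \ref{basic} guarantees the needed spectral section exists, as will be recalled in Section \ref{sec3.3}); (ii) invoke the Dai-Zhang odd families index theorem for the Toeplitz family associated to the twisted Dirac family on $S(TZ)\otimes E_+$ and the unitary $g|_{\partial M}$, which expresses $\ch$ of this index bundle as the fiber integral $\int_Z$ of $\widehat{A}(TZ,\nabla^{TZ})\ch(E_+,\nabla^{E_+})\ch(g,d)$ with the appropriate sign; (iii) apply $\ch$ to the first equality in \eqref{maeq} to get the first line of \eqref{yy4}; (iv) repeat (i)–(iii) with $E_-$ in place of $E_+$, using the second equality $\SF\{D^Y_F,D^Y_{g,F}\}=-\SF\{B^Z_{-},B^Z_{-,g|_{\partial M}}\}$ of \eqref{maeq}, to obtain the second line of \eqref{yy4}.

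The only genuinely delicate point is bookkeeping of signs and normalizations: one must verify that the orientation convention on $Z$ induced as the boundary of $Y$ (together with the identification $S(TZ)\cong S_+(TY)|_Z$ and the Clifford action $\widetilde c(v)=-Jc(v)$ fixed just after \eqref{d1}) produces exactly the minus sign in front of the first integral and the plus sign in front of the second, consistently with \cite[Theorem 3.3]{gl}; this is a standard but unavoidable check comparing the conventions of \cite{MR1638328}, \cite{MR1864735} and \cite{gl}. Everything else is a direct appeal to functoriality of $\ch$ and to the cited odd families index theorem, so no further computation is required; one may also note that \eqref{yy4} is consistent with the relation $\ch(E_+,\nabla^{E_+})+\ch(E_-,\nabla^{E_-})=\ch(E,\nabla^E)$ together with the cobordism invariance Corollary \ref{coreq}, which provides a useful sanity check.
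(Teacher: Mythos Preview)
Your proposal is correct and follows essentially the same route as the paper. The paper's proof is a one-liner: combine Theorem \ref{main} with \cite[Theorem 4.9]{MR1638328} (the formula for $\ch$ of the higher spectral flow on closed odd-dimensional fibers); your steps (i)--(iv) amount to exactly this, with the minor cosmetic difference that you pass through the Toeplitz-family interpretation and cite the odd families index theorem directly rather than its spectral-flow reformulation.
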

\begin{proof}
Combining \cite[Theorem 4.9]{MR1638328} with Theorem \ref{main}, we get \eqref{yy4}.
\end{proof}

\subsection{A reduction of the problem to the product case}

We show that we can reduce the computation of the higher spectral flow to the situation where all
objects (metrics, connections, etc.) have product structure decompositions near the boundary.

In the rest of this paper, we will suppress the subscript $b$ for $b\in S$. All the constructions below are
understood fiberwise unless stated otherwise. Also, while being fiberwise, all the procedures will
be continuous with respect to $b\in S$, as will be clear from the context.

Let ${\rm inj}^Y$ denote the injectivity radius of $(Y,g^{TY})$. Pick $0<\epsilon<\frac{1}{16}\,{\rm inj}^Y$, and fix it once for all.
We use the inward geodesic flow to identify a neighborhood of $Z$
in $Y$ with the collar $Z \times [0,16\epsilon)$.

For $0\leq\mu<\nu<16\epsilon$, we set
\begin{equation}
Z_{[\mu,\nu]}=Z\times[\mu,\nu], \quad Y(\nu)=Y-Z_{[0,\nu]}.
\end{equation}
Let $\psi$ be a smooth function on $Y$ such that
\begin{equation}
\psi|_{Z_{[0,12\epsilon]}}=1,\quad 0\leq\psi|_{[12\epsilon,14\epsilon]}\leq 1,\quad \psi|_{Y(14\epsilon)}=0.
\end{equation}

Let ${\rm pr}_1:Z_{[0,15\epsilon]}\rightarrow Z$ denote the obvious projection onto the first factor.
Recall that $g^{TZ}=g^{TY}|_{TZ}$ and that $\tfrac{\partial}{\partial r}$ is
the inward unit normal vector field perpendicular to $Z$. Set
\begin{equation}\label{y46}
g'^{\,TY}=\psi\,({\rm pr}^*_1g^{TZ}\oplus dr^2)+(1-\psi)g^{TY}.
\end{equation}
We infer from \eqref{y46} that
the connection $\nabla'^{\,TY}$ on $TY$ defined through $g'^{\,TY}$ as in Section \ref{sec2.1}
has a fiberwise product structure decomposition over $Z_{[0,12\epsilon]}$, i.e.,
\begin{equation}
\nabla'^{\,TY}|_{Z_{[0,12\epsilon]}}={\rm pr}^*_1(\nabla'^{\,TY}|_Z).
\end{equation}

We trivialize $E|_{Z_{[0,15\epsilon]}}$ over $Z_{[0,15\epsilon]}$ by parallel
transport with respect to $\nabla^E$
along the geodesics perpendicular to $Z$. Set
\begin{equation}
g'^E=\psi\,{\rm pr}^*_1(g^{E}|_Z)+(1-\psi)g^{E}.
\end{equation}
Take any Hermitian connection $\nabla_1^{E}$ on $(E,g'^E)$, and set
\begin{equation}
\nabla'^E=\psi\,{\rm pr}^*_1(\nabla^{E_+}\oplus\nabla^{E_-})+(1-\psi)\nabla_1^{E}.
\end{equation}
Then $\nabla'^E$ preserves $g'^E$.

Let $\mathbf{H}^0(Y,S(TY)\otimes E)$ and $\mathbf{H}'^{\,0}(Y,S(TY)\otimes E)$ be the $0$-th Sobolev bundles of $S(TY)\otimes E$ over $Y$
associated to the metrics induced by $(g^{TY},g^E)$ and $(g'^{\,TY},g'^E)$, respectively.

By \cite[Lemma 2.1]{MR2536846}, we can construct a family $\Psi=\{\Psi_b\}_{b\in S}$ of invertible even-parity endomorphisms of $S(TY)\otimes E$
such that $\Psi|_{Z}={\rm id}$,
which extends to a family of isometries
\begin{equation}
\Psi: \mathbf{H}^0(Y,S(TY)\otimes E)\longrightarrow  \mathbf{H}'^{\,0}(Y,S(TY)\otimes E).
\end{equation}

Using $g'^{\,TY}$, $g'^E$ and $\nabla'^E$, we define
a family of fiberwise twisted Dirac operators $D'^Y=\{D_b'^Y\}_{b\in S}$ on $S(TY)\otimes E$ as in \eqref{d1}. Set
\begin{equation*}
D'=\Psi^{-1}\circ D'^Y\circ\Psi.
\end{equation*}
Then from \cite[Lemma 2.1]{MR2536846}, we see that $D'$ is a family of
formally self-adjoint Dirac type operators on $S(TY)\otimes E$ endowed with the metric induced by $g^{TY}$ and $g^E$.
Furthermore, since $\Psi$ is fiberwise unitary and $\Psi|_{Z}={\rm id}$, combining with
Proposition \ref{hoto}, we deduce that
\begin{equation}\label{y41}
\SF\{D'^Y_F,D'^Y_{g,F}\}=\SF\{D'_F,D'_{g,F}\}=\SF\{D^Y_F,D^Y_{g,F}\}\quad \text{in}\ K(S).
\end{equation}

Let $A$ be a smooth map from $Z_{[0,15\epsilon]}$ to $\mathfrak{u}(N)$, the Lie algebra of $U(N)$, which is such that
\begin{equation}
g(z,r)=g(z,0)\exp A(z,r),\ \text{for}\ (z,r)\in Z_{[0,15\epsilon]}.
\end{equation}
Then $\psi A$ clearly extends to $Y$, and
\begin{equation}
g_t=g\exp(-t\psi A),\ t\in [0,1]
\end{equation}
defines a curve of automorphisms of $\mathbb{C}^N|_M$ such that
\begin{equation}\label{y13}
g_0=g, \ \ g_1\big|_{Z_{[0,12\epsilon]}}={\rm pr}_1^*(g|_{Z}).
\end{equation}
It follows from Proposition \ref{hoto} that
\begin{equation}\label{sp2}
\SF\{D'^Y_F,D'^Y_{g,F}\}=\SF\{D'^Y_{F},D'^Y_{g_1,F}\}\quad \text{in}\ K(S).
\end{equation}

Combining \eqref{y41} with \eqref{sp2}, we see that
\begin{equation}
\SF\{D^Y_F,D^Y_{g,F}\}=\SF\{D'^Y_{F},D'^Y_{g_1,F}\} \quad \text{in}\ K(S).
\end{equation}

From the preceding discussion, it suffice to work in the case
where all objects involved have product structure decompositions near the boundary.
Specifically, we will always make the following assumption in the remainder of this paper.

\begin{assum}\label{y42}
The following equations hold on $Z_{[0,12\epsilon]}$,
\begin{equation}
\begin{split}\label{y43}
g^{TY}|_{Z_{[0,12\epsilon]}}&={\rm pr}^*_1g^{TZ}\oplus dr^2, \ \ g^E|_{Z_{[0,12\epsilon]}}={\rm pr}^*_1(g^{E}|_Z),
\\
\nabla^E|_{Z_{[0,12\epsilon]}}&={\rm pr}^*_1(\nabla^{E_+}\oplus\nabla^{E_-}),\ \ g\big|_{Z_{[0,12\epsilon]}}={\rm pr}_1^*(g|_{Z}).
\end{split}
\end{equation}
\end{assum}

\subsection{A splitting formula for the higher spectral flow}\label{sec3.2}

We use $J=c(\tfrac{\partial}{\partial r})$ to identify $S_-(TY)$ with $S_+(TY)$ over $Z_{[0,12\epsilon]}$ via the map
\begin{equation}\label{identi}
S_-(TY)|_{Z_{[0,12\epsilon]}}\simeq S_+(TY)|_{Z_{[0,12\epsilon]}}, \ \  s_- \longmapsto -Js_-.
\end{equation}
Then the local boundary condition defined by $F$ in \eqref{lbd} takes the form under identification \eqref{identi}:
\begin{equation}
\begin{split}
\bigl\{s_+\oplus s_-\,\big|\,&s_\pm\in \mathbf{H}^{1}(Y_b,S_+(TY)_b\otimes E_b), s_-|_{Z_b}=F_bs_+|_{Z_b}\bigr\},\ b\in S.
\end{split}
\end{equation}

Recall that $S(TZ)$ is identified with $S_+(TY)|_Z$ in Section \ref{sec2.1}.
It follows from Assumption \ref{y42} that
$B^{Z}$ can be thought of as a family of differential operators
acting on $(S_+(TY)\otimes E)|_{Z_{[0,12\epsilon]}}$, which is constant along the geodesics perpendicular to $Z$.

By Assumption \ref{y42} and \eqref{identi}, we will regard
the restriction of $D^Y$ on $Z_{[0,12\epsilon]}$ as a family of operators which acts on
\begin{equation*}
\big(S_+(TY)\otimes E\oplus S_+(TY)\otimes E\big)\big|_{Z_{[0,12\epsilon]}}
\end{equation*}
in the following way:
\begin{equation}\label{bdeq2}
\begin{bmatrix}0 &D^Y_-J
\\
-JD^Y_+ & 0
\end{bmatrix}
=\begin{bmatrix}0 &-\tfrac{\partial}{\partial r}+B^Z
\\
\tfrac{\partial}{\partial r}+B^Z & 0
\end{bmatrix}.
\end{equation}

For $0\leq\mu<\nu\leq 12\epsilon$, as at the beginning of Section \ref{sec2.1},
given any vector bundle $\mathcal{E}$ endowed with a metric $g^\mathcal{E}$ and a connection $\nabla^{\mathcal{E}}$ over $M$,
we denote by $\mathbf{H}^p(Z,\mathcal{E})$, $\mathbf{H}^p(Z_{[\mu,\nu]},\mathcal{E})$
and $\mathbf{H}^p(Y(\nu),\mathcal{E})$, $p\geq 0$ the $p$-th Sobolev bundles
associated to the restrictions $(\mathcal{E},g^{\mathcal{E}},\nabla^{\mathcal{E}})|_Z$,
$(\mathcal{E},g^{\mathcal{E}},\nabla^{\mathcal{E}})|_{Z_{[\mu,\nu]}}$ and $(\mathcal{E},g^{\mathcal{E}},\nabla^{\mathcal{E}})|_{Y(\nu)}$, respectively.

Let $D^{Z_{[\mu,\nu]}}$ be the restriction of $D^Y$ to $Z_{[\mu,\nu]}$, and
$D^{Z_{[\mu,\nu]}}_{g}$ be the conjugation of $D^{Z_{[\mu,\nu]}}$ by $g$ defined as in \eqref{conju}.
If we impose the local boundary condition which is defined by
families of self-adjoint and invertible endomorphisms $F_1$ of $E|_{Z\times\{\mu\}}$
and $F_2$ of $E|_{Z\times\{\nu\}}$ on the curve
\begin{equation}\label{y21}
D^{Z_{[\mu,\nu]}}_u=(1-u)D^{Z_{[\mu,\nu]}}+uD^{Z_{[\mu,\nu]}}_{g},\ u\in [0,1],
\end{equation}
then as in Section \ref{sec2.4}, we have a higher spectral flow
\begin{equation*}
\SF\Big\{
D^{Z_{[\mu,\nu]}}_{\{F_1,F_2\}},D^{Z_{[\mu,\nu]}}_{g_1,\{F_1,F_2\}}
\Big\}
\end{equation*}
associated to the curve $D^{Z_{[\mu,\nu]}}_{u,\{F_1,F_2\}}$, $u\in [0,1]$ of realizations of \eqref{y21},
provided that the $K^1$-index of $D^{Z_{[\mu,\nu]}}_{\{F_1,F_2\}}$ vanishes.

Recall that $\widetilde{F}={\rm id}|_{E_+}\oplus (-{\rm id}|_{E_-})$.

The following splitting formula for the higher spectral flow enables us to
reduce the problem to the computation on a family of finite cylinders.

\begin{prop}\label{y20}
The following identity holds in $K(S)$,
\begin{equation}
\begin{split}\label{sp1}
\SF\{D^Y_F,D^Y_{g,F}\}
&=\SF\Big\{
D^{Z_{[0,6\epsilon]}}_{\{\widetilde{F},\,{\rm id}\}},D^{Z_{[0,6\epsilon]}}_{g,\{\widetilde{F},\,{\rm id}\}}
\Big\}
\\
&=\SF\Big\{
D^{Z_{[0,6\epsilon]}}_{\{\widetilde{F},-{\rm id}\}},D^{Z_{[0,6\epsilon]}}_{g,\{\widetilde{F},-{\rm id}\}}
\Big\}.
\end{split}
\end{equation}
\end{prop}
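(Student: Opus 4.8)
The plan is to follow the strategy indicated in the introduction, namely to adapt Bunke's idea (as used in \cite[Section 3]{MR1447295}) to the setting of local boundary conditions, cutting the fiber $Y$ along the hypersurface $Z\times\{6\epsilon\}$ and separating the higher spectral flow into a contribution from the collar $Z_{[0,6\epsilon]}$ and a contribution from the interior piece $Y(6\epsilon)$, and then showing that the interior contribution vanishes. First I would set up the cutting: on $Y=Y(6\epsilon)\cup_{Z\times\{6\epsilon\}}Z_{[0,6\epsilon]}$, impose on the interior piece $D^{Z\times\{6\epsilon\}}$-side the auxiliary boundary condition $\mathrm{id}$ (resp.\ $-\mathrm{id}$) coming from the product identification \eqref{bdeq2}, and on the collar piece impose $\widetilde F$ at $Z\times\{0\}$ and $\mathrm{id}$ (resp.\ $-\mathrm{id}$) at $Z\times\{6\epsilon\}$. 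The first step proper is therefore to replace the original boundary condition $F$ at $Z\times\{0\}$ by $\widetilde F$: this is exactly the homotopy invariance already granted by Proposition \ref{hoto} and \eqref{deform}, so $\SF\{D^Y_F,D^Y_{g,F}\}=\SF\{D^Y_{\widetilde F},D^Y_{g,\widetilde F}\}$, and from now on I work with $\widetilde F$, which under Assumption \ref{y42} is a genuine product near the boundary.

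The second and central step is a Mayer--Vietoris / additivity argument for the higher spectral flow under cutting. Here I would use the product structure near $Z\times\{6\epsilon\}$: by \eqref{bdeq2} the operator $D^Y$ restricted to $Z_{[6\epsilon-\delta,6\epsilon+\delta]}$ is of the form $\begin{bmatrix}0&-\partial_r+B^Z\\\partial_r+B^Z&0\end{bmatrix}$, and $g$ is $r$-independent there by Assumption \ref{y42}, so the whole curve $D^Y_u$ is in product form near the cut and commutes appropriately with $J$. This lets me choose, following Bunke's construction as carried out in \cite[Section 3]{MR1447295}, compatible (generalized) spectral sections for the total family $\widetilde{D^Y_F}$ over $[0,1]\times S$ which are adapted to the splitting — one obtains them from a spectral section for the boundary family $B^Z$ along $Z\times\{6\epsilon\}$ via the APS-type projection, and then the index-bundle difference $[\,\cdot\,]$ appearing in Definition \ref{defhsf} is additive with respect to the decomposition of the domain into the two pieces. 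Concretely I expect the identity $\SF\{D^Y_{\widetilde F},D^Y_{g,\widetilde F}\}=\SF\{D^{Y(6\epsilon)}_{\{\widetilde F',\,\mathrm{id}\}},D^{Y(6\epsilon)}_{g,\{\widetilde F',\,\mathrm{id}\}}\}+\SF\{D^{Z_{[0,6\epsilon]}}_{\{\widetilde F,\,\mathrm{id}\}},D^{Z_{[0,6\epsilon]}}_{g,\{\widetilde F,\,\mathrm{id}\}}\}$, where on $Y(6\epsilon)$ the only boundary is $Z\times\{6\epsilon\}$ with the boundary condition $\mathrm{id}$ induced by the product structure (and similarly with $-\mathrm{id}$).

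The third step is to kill the interior term. On $Y(6\epsilon)$ the boundary condition $\mathrm{id}$ (resp.\ $-\mathrm{id}$) is positive (resp.\ negative) definite, i.e.\ of the form $F>0$ (resp.\ $-F>0$), so Proposition \ref{y7} applies verbatim to the family over $Y(6\epsilon)$: the curve $(D+\gamma)_{u}$ is fiberwise invertible under such a definite local boundary condition, hence the associated higher spectral flow vanishes and $\SF\{D^{Y(6\epsilon)}_{\{\widetilde F',\,\mathrm{id}\}},D^{Y(6\epsilon)}_{g,\{\widetilde F',\,\mathrm{id}\}}\}=0$ (and likewise for $-\mathrm{id}$). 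Combining the three steps gives both equalities in \eqref{sp1}. The main obstacle, as anticipated, is the second step: making the Mayer--Vietoris additivity of the higher spectral flow rigorous at the level of $K(S)$ rather than just in cohomology. This requires care in (i) choosing the total generalized spectral section for the cut problem so that it genuinely decomposes along $Z\times\{6\epsilon\}$ — this is where Bunke's trick and the product structure of Assumption \ref{y42} are essential, since one needs the boundary family $B^Z$ near the cut to be $u$- and $r$-independent so that a single spectral section for $B^Z$ serves both pieces and both ends of the homotopy — and (ii) verifying that the index-bundle difference $[Q_1-P_1]-[Q_0-P_0]$ is additive under the orthogonal decomposition of domains, which is a Fredholm-family bookkeeping argument of the type in \cite[Section 3]{MR1447295} and \cite[Section 3]{MR1262254}. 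I would devote the bulk of the write-up to setting up this decomposition cleanly and then invoke Proposition \ref{y7} and Proposition \ref{hoto} for the easy endpoints.
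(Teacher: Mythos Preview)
Your overall three-step plan (pass from $F$ to $\widetilde F$ via Proposition~\ref{hoto}; cut and establish additivity; kill the interior piece via Proposition~\ref{y7}) matches the paper's strategy, and steps one and three are exactly what the paper does. The gap is in step two: the mechanism you describe for additivity is not the one that works here, and as stated it is not quite coherent.

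You propose a single cut at $Z\times\{6\epsilon\}$ and then say you will construct compatible generalized spectral sections ``from a spectral section for the boundary family $B^Z$ \dots\ via the APS-type projection''. But APS projections are pseudodifferential, not local, so they take you outside the framework of local boundary conditions that the whole paper is built on; and in any case a direct additivity formula for higher spectral flow under a sharp cut with local boundary conditions is not available off the shelf. The paper does \emph{not} cut along a single hypersurface. Instead it uses Bunke's overlapping construction literally as in \cite[Section~3]{MR1447295}: one forms
\[
H_0=\mathbf{H}^0(Y)\oplus\mathbf{H}^0(Z_{[2\epsilon,6\epsilon]}),\qquad
H_1=\mathbf{H}^0(Z_{[0,6\epsilon]})\oplus\mathbf{H}^0(Y(2\epsilon)),
\]
(note the auxiliary cylinder $Z_{[2\epsilon,6\epsilon]}$ added to $Y$, and the overlap region) and Bunke's explicit unitary $W:H_0\to H_1$ built from a partition-of-unity pair $\xi,\gamma$ with $\xi^2+\gamma^2=1$. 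Setting
\[
\mathcal{D}=D^Y_{\widetilde F}\oplus D^{Z_{[2\epsilon,6\epsilon]}}_{\{\mathrm{id},\,\mathrm{id}\}},\qquad
\mathcal{D}'=W^*\bigl(D^{Z_{[0,6\epsilon]}}_{\{\widetilde F,\,\mathrm{id}\}}\oplus D^{Y(2\epsilon)}_{\mathrm{id}}\bigr)W,
\]
one checks that $\mathcal{D}'-\mathcal{D}$ is of order zero. That single observation turns your hard ``Mayer--Vietoris additivity at the level of $K(S)$'' into a one-line application of Proposition~\ref{hoto}: $\SF\{\mathcal{D},\mathcal{D}_g\}=\SF\{\mathcal{D}',\mathcal{D}'_g\}$. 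Since $g$ commutes with $W$, both sides split as direct sums, and now \emph{three} of the four summands (the auxiliary cylinder with $\{\mathrm{id},\mathrm{id}\}$, and $Y(2\epsilon)$ with $\mathrm{id}$) carry definite boundary conditions, so Proposition~\ref{y7} kills them, leaving exactly \eqref{sp1}. No spectral section for $B^Z$ is ever chosen in this argument; the auxiliary boundary conditions at the cut are all local ($\pm\mathrm{id}$), which is also what makes the vanishing of the $K^1$-index on the collar piece follow from Basic Assumption~\ref{basic}.
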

\begin{proof}

Let $\xi$ be a smooth function on $[2\epsilon,6\epsilon]$ such that
\begin{equation*}
\xi|_{[2\epsilon,3\epsilon]}=1,\quad 0\leq\xi|_{[3\epsilon,5\epsilon]}\leq 1,\quad \xi|_{[5\epsilon,6\epsilon]}=0
\end{equation*}
and that
$\gamma=(1-\xi^2)^{1/2}$
is also smooth. We extend $\xi$ to $Z_{[0,6\epsilon]}$ by
equaling $1$ on $Z_{[0,2\epsilon]}$ and to
$Y(2\epsilon)$ by equaling $0$ on $Y(6\epsilon)$. Thus $\gamma$ also extends to $Z_{[0,6\epsilon]}$ and $Y(2\epsilon)$.

As in \cite[(3.11)]{MR1447295}, we consider the Hilbert bundles
\begin{equation}
\begin{split}
H_0&=\textbf{H}^0(Y,S(TY)\otimes E)\oplus \textbf{H}^0(Z_{[2\epsilon,6\epsilon]},S(TY)\otimes E),
\\
H_1&=\textbf{H}^0(Z_{[0,6\epsilon]},S(TY)\otimes E)\oplus \textbf{H}^0(Y(2\epsilon),S(TY)\otimes E).
\end{split}
\end{equation}

Following Bunke \cite{MR1331973} (see also \cite{MR1447295}),
we define the operators $a$, $b$, $c$, $d:H_0\rightarrow H_1$
fiberwise by
\begin{enumerate}[{(\romannumeral 1)}]
\item $a$ is the multiplication by $\xi$ on $Y$ followed by the transfer to $Z_{[0,6\epsilon]}$,
\item $b$ is the multiplication by $\xi$ on $Z_{[2\epsilon,6\epsilon]}$ followed by the transfer to $Y(2\epsilon)$,
\item $c$ is the multiplication by $\gamma$ on $Y$ followed by the transfer to $Y(2\epsilon)$,
\item $d$ is the multiplication by $\gamma$ on $Z_{[2\epsilon,6\epsilon]}$ followed by the transfer to $Z_{[0,6\epsilon]}$.
\end{enumerate}
Here the operator $b$ should not be confused with a point in the base manifold.

Set as in \cite{MR1331973},
\begin{equation}
W=a+b+c-d: H_0\rightarrow H_1.
\end{equation}

\begin{lem}[{\cite{MR1331973}}]
The operator $W: H_0\rightarrow H_1$ is unitary.
\end{lem}

Let $D^{Y(2\epsilon)}$ be the restriction of $D^Y$ to $Y(2\epsilon)$, and
$D_{{\rm id}}^{Y(2\epsilon)}$ be the realization of $D^{Y(2\epsilon)}$ imposed with the local
boundary condition defined by the identity endomorphism of $E|_{Z\times\{2\epsilon\}}$.

Set as in \cite{MR1331973} (see also \cite[(3.15)-(3.16)]{MR1447295}),
\begin{equation}\label{d0d1}
\mathcal{D}=D^Y_{\widetilde{F}}\oplus D^{Z_{[2\epsilon,6\epsilon]}}_{\{{\rm id},\,{\rm id}\}}\,,
\ \mathcal{D}'=W^*\Big(D^{Z_{[0,6\epsilon]}}_{\{\widetilde{F},\,{\rm id}\}}\oplus D_{{\rm id}}^{Y(2\epsilon)}\Big)W.
\end{equation}
Then $\mathcal{D}$ and $\mathcal{D}'$ have the same domain and furthermore one verifies easily that the operator
(see \cite{MR1331973} and \cite[(3.17)]{MR1447295})
\begin{equation}\label{yy2}
G=\mathcal{D}'-\mathcal{D}
\end{equation}
is a family of differential operators of zeroth order. In particular, due to Basic Assumption \ref{basic}, it follows that the index bundles
of $\mathcal{D}$ and $\mathcal{D}'$ vanish in $K^1(S)$. Consequently,
the index bundle of $D^{Z_{[0,6\epsilon]}}_{\{\widetilde{F},\,{\rm id}\}}$ vanishes in $K^1(S)$.

Let $\mathcal{D}_{g}$ and $\mathcal{D}'_{g}$ be the conjugations of $\mathcal{D}$ and $\mathcal{D}'$ by $g$ defined as in \eqref{conju}, respectively.
Applying Proposition \ref{hoto}, we see that
\begin{equation}\label{yu}
\SF\{\mathcal{D},\mathcal{D}_{g}\}=\SF\{\mathcal{D}',\mathcal{D}'_{g}\}\quad \text{in}\ K(S).
\end{equation}

Observe that $g$ regarded as a fiberwise bounded operator commutes with $W$.
From \eqref{d0d1}, \eqref{yu} and Proposition \ref{y7}, we get
\begin{equation}\label{sp3}
\SF\{D^Y_{\widetilde{F}},D^Y_{g,\widetilde{F}}\}
=\SF\Big\{
D^{Z_{[0,6\epsilon]}}_{\{\widetilde{F},\,{\rm id}\}},D^{Z_{[0,6\epsilon]}}_{g,\{\widetilde{F},\,{\rm id}\}}
\Big\}\quad \text{in}\ K(S).
\end{equation}
Combining with Proposition \ref{hoto}, we obtain the first equality in \eqref{sp1}.

By replacing the boundary condition ${\rm id}$ in \eqref{d0d1}-\eqref{sp3} by $-{\rm id}$ and
using the same argument, we get
the second equality in \eqref{sp1}.
\end{proof}

\subsection{Computation on finite cylinders}\label{sec3.3}

\begin{thm}\label{cyl}
The following identities hold in $K(S)$,
\begin{equation}
\begin{split}\label{yy1}
\SF\Big\{
D^{Z_{[0,6\epsilon]}}_{\{\widetilde{F},\,{\rm id}\}},D^{Z_{[0,6\epsilon]}}_{g,\{\widetilde{F},\,{\rm id}\}}
\Big\}
&=-\SF\{B^Z_{-},B^Z_{-,g|_{\partial M}}\},
\\
\SF\Big\{
D^{Z_{[0,6\epsilon]}}_{\{\widetilde{F},-{\rm id}\}},D^{Z_{[0,6\epsilon]}}_{g,\{\widetilde{F},-{\rm id}\}}
\Big\}
&=\SF\{B^Z_{+},B^Z_{+,g|_{\partial M}}\}.
\end{split}
\end{equation}
\end{thm}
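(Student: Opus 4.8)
The plan is to exploit the product structure near the boundary guaranteed by Assumption \ref{y42} in order to identify, up to a homotopy that does not change the higher spectral flow, the cylinder operators $D^{Z_{[0,6\epsilon]}}_{u}$ with a direct sum governed by the decomposition $E|_Z=E_+\oplus E_-$ and the boundary Dirac operators $B^Z_\pm$. Concretely, over $Z_{[0,6\epsilon]}$ formula \eqref{bdeq2} writes $D^{Z_{[0,6\epsilon]}}_u$ (after the identification \eqref{identi} and using the conjugation \eqref{toep}) as a first-order operator of the form $\begin{bmatrix} 0 & -\partial_r + B^Z_u \\ \partial_r + B^Z_u & 0\end{bmatrix}$, where $B^Z_u=(1-u)B^Z+uB^Z_{g|_{\partial M}}$ acts fiberwise on $(S_+(TY)\otimes E)|_Z$ and, by \eqref{bdeq}, respects the splitting $B^Z=B^Z_+\oplus B^Z_-$ once we also split $E|_Z=E_+\oplus E_-$ (note $g|_{Z_{[0,12\epsilon]}}={\rm pr}_1^*(g|_Z)$ commutes with this splitting by the last line of \eqref{y43}). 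The boundary conditions $\{\widetilde F,\pm{\rm id}\}$ are precisely $\pm{\rm id}$ on $E_+$ and $\mp{\rm id}$ on $E_-$ at the two ends, which are exactly the boundary conditions that make the cylinder operator with coefficient operator $B^Z_\pm$ either invertible or reduce to a computable model — this is where the sign split between the two lines of \eqref{yy1} comes from.

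The key steps, in order, are as follows. First I would split everything according to $E=E_+\oplus E_-$ over $Z_{[0,6\epsilon]}$, so that $D^{Z_{[0,6\epsilon]}}_{u,\{\widetilde F,\pm{\rm id}\}}$ becomes a fiberwise direct sum of two cylinder operators, one built from $B^Z_+$ with boundary condition $({\rm id},\pm{\rm id})$ and one built from $B^Z_-$ with boundary condition $(-{\rm id},\pm{\rm id})$, and correspondingly the higher spectral flow splits as a sum (additivity of ${\rm SF}$ under direct sums, which follows from the definition in Definition \ref{defhsf} and the behaviour of index bundles under direct sums). Second, for the summand whose two-end boundary conditions \emph{disagree} in sign on that subbundle — e.g.\ $({\rm id},-{\rm id})$ for $B^Z_+$ with the $\{\widetilde F,\,-{\rm id}\}$ choice, or $(-{\rm id},-{\rm id})$ type mismatches — I would show the cylinder operator is fiberwise invertible along the whole homotopy, using a Green's-formula argument essentially identical to the one in the proof of Proposition \ref{y7}: a kernel element satisfies $(\pm\partial_r+B^Z_\pm)s=0$ with incompatible boundary data, forcing $s|_{\text{both ends}}=0$ and then $s\equiv0$; hence that summand contributes $0$. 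Third, for the remaining summand — where the two ends carry \emph{matching} boundary conditions relative to that subbundle — I would use the product structure to homotope the finite cylinder $Z_{[0,6\epsilon]}$ to arbitrarily short length (a rescaling homotopy, legitimate by Proposition \ref{hoto} since it is fiberwise continuous in the graph topology), and then identify, in the short-cylinder limit, the higher spectral flow of the curve of cylinder realizations with $\pm{\rm SF}\{B^Z_\pm,B^Z_{\pm,g|_{\partial M}}\}$ via separation of variables: eigenfunctions decompose into $B^Z_\pm$-eigensections times one-dimensional modes on $[0,6\epsilon]$, and the spectral-flow count on the cylinder reduces to that of the base family $B^Z_\pm$, with the sign dictated by whether it is the $+$ or $-$ end condition (this is the families analogue of the single-operator computation in the second proof of \cite[Corollary 3.4]{gl}). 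Finally, tracking which summand survives in each of the two boundary-condition choices $\{\widetilde F,\,{\rm id}\}$ versus $\{\widetilde F,\,-{\rm id}\}$ yields respectively $-{\rm SF}\{B^Z_-,B^Z_{-,g|_{\partial M}}\}$ and ${\rm SF}\{B^Z_+,B^Z_{+,g|_{\partial M}}\}$, which are exactly the two identities in \eqref{yy1}.

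The main obstacle I expect is the third step: making the reduction from the cylinder higher spectral flow to the base-family higher spectral flow $K$-theoretically precise, rather than merely on the level of Chern characters. One must choose spectral sections on the cylinder that are compatible with separation of variables and with the chosen spectral section of $B^Z_\pm$ on $Z$, verify that the model cylinder operator with matching end conditions has the expected null-space structure along the homotopy (so that the surviving spectral flow is genuinely that of the coefficient family, with no spurious contributions from the normal $[0,6\epsilon]$ direction), and check that the identification is natural in $b\in S$ so that it is an equality of index bundles in $K(S)$ and not just of cohomology classes. The graph-continuity input from \cite[Theorem 7.16]{MR2536846} and Proposition \ref{hoto}, together with the invertibility arguments modeled on Proposition \ref{y7}, should suffice to control all the homotopies involved, but assembling these into a clean identification of $K$-theory classes is the delicate part.
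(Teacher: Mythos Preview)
Your overall architecture---split along $E|_Z=E_+\oplus E_-$, kill one summand, and reduce the other to the boundary family---is the same as the paper's. But there is a genuine error in Step~2, and your Step~3 is more roundabout than necessary.

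\textbf{The reversal in Step 2.} You assign the vanishing summand to the case where the two end conditions \emph{disagree} in sign, citing $({\rm id},-{\rm id})$ for $B^Z_+$ as the example to which a Green's-formula argument \`a la Proposition~\ref{y7} applies. This is backwards. Proposition~\ref{y7} requires the boundary endomorphism to be globally positive (or globally negative) on the \emph{whole} boundary; on the cylinder $Z_{[0,6\epsilon]}$, whose boundary is $Z\times\{0\}\sqcup Z\times\{6\epsilon\}$, this means the two end conditions must have the \emph{same} sign. Thus for $\{\widetilde F,{\rm id}\}$ it is the $E_+$ summand with $({\rm id},{\rm id})$ that dies by Proposition~\ref{y7}, and the $E_-$ summand with $(-{\rm id},{\rm id})$ that survives; for $\{\widetilde F,-{\rm id}\}$ it is the $E_-$ summand with $(-{\rm id},-{\rm id})$ that dies, and the $E_+$ summand with $({\rm id},-{\rm id})$ that survives. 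If you try to run the Green's-formula argument in the mixed-sign case, the boundary term $\langle Js_+,s_-\rangle_{L^2(\partial)}$ no longer has a sign and you cannot conclude $s|_{\partial}=0$. (Your own list of ``mismatches'' already contains $(-{\rm id},-{\rm id})$, a same-sign pair, which should have signaled the inconsistency.)

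\textbf{Step 3 without a limit.} For the surviving summand you propose a short-cylinder limit and worry, correctly, that this is delicate at the $K$-theory level. The paper avoids this entirely. For the $E_-$ summand with boundary data $(-{\rm id},{\rm id})$, after the identification \eqref{identi} the two end conditions become $s_-=-s_+$ at \emph{both} ends (the sign flip at $r=6\epsilon$ comes from the reversed inward normal). Expanding in a fiberwise eigenbasis $\{s_i(u)\}$ of $B^Z_{-,u}$ and solving the resulting ODE for the coefficients, one finds that each eigenvalue $\lambda$ of $D^{Z_{[0,6\epsilon]}}_{-,u,\{-{\rm id},{\rm id}\}}$ either equals $-\lambda_i(u)$ for some $i$ (with eigensection the image under $\imath$ of $s_i(u)$, constant in $r$), or satisfies $\lambda^2=\lambda_i(u)^2+n^2\pi^2/(36\epsilon^2)$ for some $n\neq0$, hence $|\lambda|\ge \pi/(6\epsilon)$. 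This produces an \emph{exact intrinsic} orthogonal splitting
\[
D^{Z_{[0,6\epsilon]}}_{-,u,\{-{\rm id},{\rm id}\}}=\bigl(-B^Z_{-,u}\bigr)\oplus \mathcal A_u,
\]
with $\mathcal A_u$ uniformly invertible for all $u$ and $b\in S$, where the first summand sits on $\imath\bigl(\mathbf H^1(Z,S_+(TY)\otimes E_-)\bigr)$ and the decomposition does not depend on the choice of eigenbasis. The higher spectral flow then equals that of $-B^Z_{-,u}$ on the nose, in $K(S)$, with no rescaling or limiting procedure. The same computation with the roles of $\pm$ and the signs exchanged handles the second line. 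This is precisely the clean $K$-theoretic identification you flagged as the main obstacle, and it is available for any fixed $\epsilon$.
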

\begin{proof}
According to \eqref{bdeq}, \eqref{bdeq2} and the definition of $D^{Z_{[0,6\epsilon]}}$, we can
split $D^{Z_{[0,6\epsilon]}}$ with respect to the splitting $E|_Z=E_+\oplus E_-$ as
\begin{equation}\label{y11}
D^{Z_{[0,6\epsilon]}}=D^{Z_{[0,6\epsilon]}}_+\oplus D^{Z_{[0,6\epsilon]}}_-.
\end{equation}
Let $D^{Z_{[0,6\epsilon]}}_{+,g}$ and $D^{Z_{[0,6\epsilon]}}_{-,g}$ be the conjugations of
$D^{Z_{[0,6\epsilon]}}_{+}$ and $D^{Z_{[0,6\epsilon]}}_{-}$ by $g$ defined as in \eqref{conju}, respectively.

As in \eqref{toep}, we consider the curves
\begin{equation}\label{y12}
D^{Z_{[0,6\epsilon]}}_{\pm,u}=(1-u)D^{Z_{[0,6\epsilon]}}_{\pm}+u D^{Z_{[0,6\epsilon]}}_{\pm,g},\ u\in [0,1].
\end{equation}
We impose on $D^{Z_{[0,6\epsilon]}}_{+,u}$, $u\in [0,1]$ the local boundary condition which is defined by the endomorphisms
${\rm id}|_{E_+}$ of $E_+|_{Z}$ and ${\rm id}|_{E_+}$ of $E_+|_{Z\times \{6\epsilon\}}$, and impose on
$D^{Z_{[0,6\epsilon]}}_{-,u}$, $u\in [0,1]$ the local boundary condition which is defined by the endomorphisms
$-{\rm id}|_{E_-}$ of $E_-|_{Z}$ and ${\rm id}|_{E_-}$ of $E_-|_{Z\times \{6\epsilon\}}$.
Let $D^{Z_{[0,6\epsilon]}}_{+,u,\{{\rm id},\,{\rm id}\}}$
and $D^{Z_{[0,6\epsilon]}}_{-,u,\{-{\rm id},\,{\rm id}\}}$ denote the corresponding realizations.
Then by \eqref{y21}, \eqref{y11} and \eqref{y12},
we have the splitting of the curves
\begin{equation}\label{yy3}
D^{Z_{[0,6\epsilon]}}_{u,\{\widetilde{F},\,{\rm id}\}}
=D^{Z_{[0,6\epsilon]}}_{+,u,\{{\rm id},\,{\rm id}\}}\oplus
D^{Z_{[0,6\epsilon]}}_{-,u,\{-{\rm id},\,{\rm id}\}},\ u\in [0,1].
\end{equation}

Due to the vanishing of the $K^1$-index of $D^{Z_{[0,6\epsilon]}}_{\{\widetilde{F},\,{\rm id}\}}$ and Proposition \ref{y7},
we obtain the vanishing of the $K^1$-index of $D^{Z_{[0,6\epsilon]}}_{-,\{-{\rm id},\,{\rm id}\}}$ and
the splitting of the higher spectral flows associated to \eqref{yy3}, which implies
\begin{equation}\label{split}
\SF\Big\{
D^{Z_{[0,6\epsilon]}}_{\{\widetilde{F},\,{\rm id}\}},D^{Z_{[0,6\epsilon]}}_{g,\{\widetilde{F},\,{\rm id}\}}
\Big\}
=
\SF\Big\{
D^{Z_{[0,6\epsilon]}}_{-,\{-{\rm id},\,{\rm id}\}},D^{Z_{[0,6\epsilon]}}_{-,g,\{-{\rm id},\,{\rm id}\}}
\Big\}.
\end{equation}

We proceed to compute the term on the right hand side of \eqref{split}.

Using \eqref{y14}, \eqref{y43}, \eqref{bdeq2}, \eqref{y11} and \eqref{y12}, we regard $D^{Z_{[0,6\epsilon]}}_{-,u}$, $u\in [0,1]$
as families of operators acting on
$\big(S_+(TY)\otimes E\oplus S_+(TY)\otimes E\big)\big|_{Z_{[0,6\epsilon]}}$ like
\begin{equation}\label{y17}
\begin{bmatrix}
0 &-\frac{\partial}{\partial r}+B^Z_{-,u}\,
\\ \,\frac{\partial}{\partial r}+B^Z_{-,u} &0
\end{bmatrix},\ u\in [0,1].
\end{equation}
Moreover, the local boundary condition imposed on
$D^{Z_{[0,6\epsilon]}}_{-,u}$, $u\in [0,1]$ takes the form under identification \eqref{identi}:                                                                                                \begin{equation}
\begin{split}\label{y15}
\Bigl\{s_+\oplus s_-\,&\Big|\,s_\pm\in \mathbf{H}^1(Z_{[0,6\epsilon]},S_+(TY)\otimes E),\\
&\hspace{30pt} s_-|_{Z}=-s_+|_{Z},\ s_-|_{Z\times\{6\epsilon\}}=-s_+|_{Z\times\{6\epsilon\}}\Bigr\}.
\end{split}
\end{equation}

For $s\in \textbf{H}^1(Z,S_+(TY)\otimes E_-)$, set
\begin{equation}\label{y16}
\widetilde{s}(r,z)=s(z)\oplus (-Js(z)),\ \text{for}\ r\in [0,6\epsilon], z\in Z.
\end{equation}
Then by \eqref{y15} and \eqref{y16}, we have a naturally defined map
\begin{equation}\label{y19}
\imath: \textbf{H}^1(Z,S_+(TY)\otimes E_-)\longrightarrow \dom\Big(D^{Z_{[0,6\epsilon]}}_{-,u,\{-{\rm id},\,{\rm id}\}}\Big),
\quad
s\longmapsto \widetilde{s}.
\end{equation}

In what follows, for an operator $\mathcal{A}$, we will denote by $\spec(\mathcal{A})$ the spectrum of $\mathcal{A}$.
The computations below are mainly inspired by \cite[Section 6]{MR3077920}.

We fix any $u\in [0,1]$. Observe that $B^Z_{-,u}$ is a family of elliptic self-adjoint differential operators of first order.
By \cite[Theorem {\uppercase\expandafter{\romannumeral 3}}.5.8]{MR1031992}, we know that the fiberwise spectrum $\spec(B^Z_{-,u})$
is discrete and each eigenvalue has finite multiplicity.
Let $\{\lambda_i(u)\}_{i=1}^\infty$ denote all the fiberwise eigenvalues of $B^Z_{-,u}$ which are counted with multiplicities.
Then we can pick a fiberwise $L^2$-orthonormal basis $\{s_i(u)\}_{i=1}^{\infty}$
of $\textbf{H}^0(Z,S_+(TY)\otimes E_-)$, which consists of fiberwise smooth sections
and is such that
\begin{equation*}
B^Z_{-,u}s_i(u)=\lambda_i(u)s_i(u),\ \text{for}\ i\in \mathbb{N}.
\end{equation*}

As explained in Section \ref{sec2.1}, the fiberwise
spectrum $\spec\Big(D^{Z_{[0,6\epsilon]}}_{-,u,\{-{\rm id},\,{\rm id}\}}\Big)$ is discrete and
each eigenvalue has finite multiplicity.
We denote by $H_\lambda$ the fiberwise $\lambda$-eigenspace for any
$\lambda\in \spec\Big(D^{Z_{[0,6\epsilon]}}_{-,u,\{-{\rm id},\,{\rm id}\}}\Big)$.

For any $\lambda\in \spec\Big(D^{Z_{[0,6\epsilon]}}_{-,u,\{-{\rm id},\,{\rm id}\}}\Big)$,
$s=s_+\oplus s_-\in H_\lambda$, we write
\begin{equation}\label{y18}
s_+\oplus (-J s_-)  =\sum_{i=0}^\infty f_i(r)s_i(u)\oplus \sum_{i=0}^\infty \widetilde{f}_i(r)s_i(u).
\end{equation}
Using \eqref{y17}, \eqref{y15} and \eqref{y18}, we infer from
$D^{Z_{[0,6\epsilon]}}_{-,u,\{-{\rm id},\,{\rm id}\}}s=\lambda s$ that
\begin{equation}
\begin{split}\label{loceq}
\tfrac{\partial}{\partial r}f_i+\lambda_i f_i
&=\lambda \widetilde{f}_i,\
-\tfrac{\partial}{\partial r}\widetilde{f}_i+\lambda_i \widetilde{f}_i=\lambda f_i,
\\
&f_i(r)+\widetilde{f}_i(r)=0,\ \text{when}\ r=0,6\epsilon.
\end{split}
\end{equation}
From the first line in \eqref{loceq}, we deduce that
\begin{equation}
\frac{\partial}{\partial r^2}(f_i+\widetilde{f}_i)=(\lambda_i^2-\lambda^2)(f_i+\widetilde{f}_i).
\end{equation}
Therefore, we have the following cases:
\begin{enumerate}[{(\romannumeral 1)}]
\item either $f_i(r)=\widetilde{f}_i(r)=0$,
\item or $\lambda=-\lambda_i$, $f_i(r)={\rm const}$, and $\widetilde{f}_i(r)=-f_i(r)$,
\item or $\lambda_i^2-\lambda^2=-\frac{n^2\pi^2}{36\epsilon^2}$, $n\in \mathbb{Z}\setminus\{0\}$,
\begin{equation*}
f_i(r)+\widetilde{f}_i(r)={\rm const}\cdot \big(\exp{\tfrac{\sqrt{-1}n\pi r}{6\epsilon}}-\exp(-\tfrac{\sqrt{-1}n\pi r}{6\epsilon})\big).
\end{equation*}
\end{enumerate}

Denote $\spec^{\rm c}=\spec\Big(D^{Z_{[0,6\epsilon]}}_{-,u,\{-{\rm id},\,{\rm id}\}}\Big)\Big\backslash\spec\,(-B^Z_{-,u})$.
Then from the discussions above, we have the fiberwise $L^2$-orthogonal decomposition
\begin{equation}\label{y4}
\dom\Big(D^{Z_{[0,6\epsilon]}}_{-,u,\{-{\rm id},\,{\rm id}\}}\Big)
=\bigoplus_{\lambda\in \spec(-B^Z_{-,u})}H_\lambda\oplus\bigoplus_{\lambda\in \spec^{\rm c}}H_\lambda.
\end{equation}
Moreover, by case {(\romannumeral 2)} and \eqref{y16}-\eqref{y18}, we know that
\begin{equation}
H_\lambda=\imath(\mathcal{H}_{-\lambda}),\ \text{for}\ \lambda\in \spec(-B^Z_{-,u}),
\end{equation}
where by $\mathcal{H}_\nu$ we mean the $\nu$-eigenspace for $\nu\in \spec\,(B^Z_{-,u})$. Therefore,
\begin{equation}\label{y8}
\bigoplus_{\lambda\in \spec(-B^Z_{-,u})}H_\lambda
=\imath\Big(\,\bigoplus_{\lambda\in \spec(B^Z_{-,u})}\mathcal{H}_\lambda\,\Big)
=\imath\Big(\textbf{H}^1(Z,S_+(TY)\otimes E_-)\Big)
\end{equation}

Denote $H^\perp=\bigoplus_{\lambda\in \spec^{\rm c}}H_\lambda$. From \eqref{y4} and \eqref{y8}, we get
the fiberwise $L^2$-orthogonal decomposition
\begin{equation}\label{y9}
\dom\Big(D^{Z_{[0,6\epsilon]}}_{-,u,\{-{\rm id},\,{\rm id}\}}\Big)
=\imath\Big(\textbf{H}^1(Z,S_+(TY)\otimes E_-)\Big)\oplus H^\perp.
\end{equation}
We emphasize here that the decomposition \eqref{y9} does not depend on the choice of $\{s_i(u)\}_{i=1}^{\infty}$.

Corresponding to \eqref{y9}, we can split $D^{Z_{[0,6\epsilon]}}_{-,u,\{-{\rm id},\,{\rm id}\}}$ intrinsically as
\begin{equation}\label{y5}
D^{Z_{[0,6\epsilon]}}_{-,u,\{-{\rm id},\,{\rm id}\}}=\big(\!-B^Z_{-,u}\big)\oplus \mathcal{A}_u\,,
\end{equation}
where $\mathcal{A}_u$ is a family of invertible operators.
Varying the parameter $u$ in $[0,1]$, we get the splitting of the curves associated to \eqref{y5}.

From \eqref{y5}, we infer that
\begin{equation}
\ind\big(D^{Z_{[0,6\epsilon]}}_{-,\{-{\rm id},\,{\rm id}\}}\big)=-\ind(B^Z_-)\ \ \text{in}\ K^1(S).
\end{equation}
Consequently, the spectral section for $B^Z_-$ always exists.
Moreover,  we conclude that
\begin{equation}\label{cybd}
\SF\Big\{
D^{Z_{[0,6\epsilon]}}_{-,\{-{\rm id},\,{\rm id}\}},D^{Z_{[0,6\epsilon]}}_{-,g,\{-{\rm id},\,{\rm id}\}}
\Big\}
=-\SF\{B^Z_{-},B^Z_{-,g|_{\partial M}}\}.
\end{equation}

Combining \eqref{split} with \eqref{cybd}, we get the first line in \eqref{yy1}. The proof of the second line
in \eqref{yy1} follows in a similar manner.
\end{proof}

Combining Proposition \ref{y20} with Theorem \ref{cyl}, we get Theorem \ref{main}.

%\bibliographystyle{plain}
%%\bibliographystyle{alpha}
%\bibliography{hi_sp_fl}

\end{document}